\title[Semilinear parabolic SPDEs with polynomially bounded coefficients]{Solution theory to\\semilinear parabolic stochastic partial differential equations\\
with polynomially bounded coefficients}
\author{Alessia Ascanelli}
\address{Dipartimento di Matematica ed Informatica, Universit\`a di Ferrara, Via Machiavelli n.~30, 44121 Ferrara, Italy}
\email{alessia.ascanelli@unife.it}
\author{Sandro Coriasco}
\address{Dipartimento di Matematica ``G. Peano'', Universit\`a degli Studi di Torino, via Carlo Alberto n.~10, 10123 Torino, Italy}
\email{sandro.coriasco@unito.it}
\author{Andr{\'e} S{\"u}\ss}
\address{C/O Dipartimento di Matematica ed Informatica, Universit\`a di Ferrara, Via Machiavelli n.~30, 44121 Ferrara, Italy}
\email{suess.andre@web.de}
\date{}
\newcommand*{\ii}{\mathrm{i}}
\newcommand*{\scrF}{\ensuremath{\mathscr{F}}}	
\newcommand*{\scrL}{\ensuremath{\mathscr{L}}}	
\newcommand*{\caF}{\ensuremath{\mathcal{F}}}		
\newcommand*{\caH}{\ensuremath{\mathcal{H}}}
\newcommand*{\caS}{\ensuremath{\mathcal{S}}}		
\newcommand*{\caM}{\ensuremath{\mathcal{M}}}
\def\ds{\displaystyle}
\newcommand*{\N}{\mathbb{N}}										
\newcommand*{\R}{\mathbb{R}}										
\newcommand*{\Rd}{{\mathbb{R}^d}}							
\newcommand*{\C}{\mathbb{C}}
\newcommand*{\calS}{\mathbb{S}}
\newcommand*{\E}{\mathbb{E}}										
\renewcommand*{\P}{\mathbb{P}}									
\renewcommand*{\Re}{\mathrm{Re}}								
\newcommand{\x}{\langle x\rangle}
\newcommand{\csi}{\langle \xi \rangle}
\newcommand{\pdd}{\langle D \rangle}
\newcommand{\jap}{\langle \cdot\rangle}
\def\<{{\langle}}
\def\>{{\rangle}}
\newcommand{\Lip}{\mathrm{Lip}}
\newcommand{\Liploc}{\mathrm{Lip}_{\mathrm{loc}}}
\newcommand{\rf}{\mathrm{rf}}
\newcommand{\fv}{\mathrm{fv}}
\newcommand{\onehalf}{1/2}
\numberwithin{equation}{section}
\theoremstyle{plain}
\newtheorem{lemma}{Lemma}[section]
\newtheorem{theorem}[lemma]{Theorem}
\newtheorem{proposition}[lemma]{Proposition}
\newtheorem{corollary}[lemma]{Corollary}
\theoremstyle{definition}
\newtheorem{definition}[lemma]{Definition}
\newtheorem{remark}[lemma]{Remark}
\newtheorem{example}[lemma]{Example}
\newtheorem*{maintheorem}{Theorem}
\newcommand{\beqsn}{\arraycolsep1.5pt\begin{eqnarray*}}
\newcommand{\eeqsn}{\end{eqnarray*}\arraycolsep5pt}
\newcommand{\beqs}{\arraycolsep1.5pt\begin{eqnarray}}
\newcommand{\eeqs}{\end{eqnarray}\arraycolsep5pt}
\definecolor{red}{rgb}{1,0,0}
\def\Op{ {\operatorname{Op}} }
\newcommand{\vvvert}{\|}
\begin{document}

\begin{abstract}
	We study function-valued solutions of a class of stochastic partial differential equations, involving operators
	with $(t,x)$-dependent, polynomially bounded coefficients. We consider semi\-li\-ne\-ar equations under suitable 
	parabolicity hypotheses. We provide conditions on the initial data
	and on the stochastic terms, namely, on the associated spectral measure, so that these mild  solutions
	exist uniquely in suitably chosen functional classes. 
\end{abstract}

\subjclass[2010]{Primary: 35L10, 60H15; Secondary: 35L40, 35S30}

\keywords{Parabolic stochastic partial differential equations; Function-valued solutions; Variable coefficients; Fundamental solution}

\maketitle

%
\section{Introduction}\label{sec:intro}
We consider stochastic partial differential equations (SPDEs in the sequel) of the general form
\begin{equation}\label{eq:SPDE}
  Lu(t,x)=[\partial_t+A(t)]u(t,x) = \gamma(t,x,u(t,x)) + \sigma(t,x, u(t,x))\dot{\Xi}(t,x), \quad (t,x)\in[0,T]\times\R^d,
\end{equation}
where:
\begin{itemize}
\item[-] $A(t)$ is a continuous family of linear partial differential operators, that contains partial derivatives in space, with $(t,x)$-dependent coefficients, namely 
	\[
		[A(\cdot)u](t,x)=\sum_{|\alpha|\leq \mu}a_{\alpha}(t,x)\, (D_x^\alpha u)(t,x),
	\]
	where $D=-i\partial$, $\mu\geq 1$;
\item[-] the linear operator $L$ satisfies a parabolicity condition, see \eqref{roots} below;
\item[-] $\gamma$ and $\sigma$, the drift term and the diffusion coefficient, respectively, are real-valued functions, subject to certain regularity conditions;
\item[-] $\Xi$ is an $\mathcal S'(\R^d)$-valued Gaussian process, white in time and coloured in space, with correlation measure $\Gamma$ and spectral measure
$\mathfrak{M}$ (see Section \ref{sec:stochastics} for a precise definition);
\item [-]  $u$ is an unknown stochastic process, called \emph{solution} of the SPDE.
\end{itemize}
The equations \eqref{eq:SPDE} are semilinear, that is, the only possible non-linearities are on the right-hand side. This class of equations is widely studied in the literature, 
to model uncertainty both in life sciences, quantum field theory and engineering, see, for instance, \cite{H}. In particular, several stochastic heat models with random 
potentials (Anderson model, generalized Anderson model) have been recently considered, see \cite{GLO, GIP} and the references quoted therein. 
These models have applications in population dynamics, finance, aerodynamics and acoustics, and are commonly considered on the torus. However, 
global properties of the Anderson model on $\R^2$ are studied in \cite{HL}.

To give meaning to \eqref{eq:SPDE} we rewrite it formally in its corresponding integral form and look for \emph{mild solutions}, that is, stochastic processes $u(t,x)$ satisfying
\begin{equation}\label{eq:mildsolutionSPDE}
  u(t,x) = v_0(t,x)+\int_0^t\int_\Rd \Lambda(t,s,x,y)\gamma(s,y,u(s,y))dyds +\int_0^t\int_\Rd \Lambda(t,s,x,y)\sigma(s,y,u(s,y))\dot\Xi(s,y)dyds,
\end{equation}
where:
\begin{itemize}
\item[-] $v_0$ is a deterministic term, taking into account the initial condition;
\item[-] $\Lambda$ is a suitable kernel, associated with the fundamental solution of the (deterministic) partial differential equation $Lu(t)=[\partial_t+A(\cdot)]u(t)=0$; 
\item[-] the first integral in \eqref{eq:mildsolutionSPDE} is of deterministic type, while the second is a stochastic integral, and both are distributional integrals, since $\Lambda(t,s,x,y)$ is, in general, a distribution with respect to $(x,y)\in\R^{2d}$.
\end{itemize}

The kind of solution $u$ we can construct for equation \eqref{eq:SPDE} depends on the approach we employ to make sense of the stochastic integral appearing in \eqref{eq:mildsolutionSPDE}.
In the present paper we follow the Da Prato-Zabczyk approach (see \cite{dapratozabczyk}), which consists in associating an Hilbert space valued Brownian motion with the random noise. One can then define the stochastic integral as an infinite sum of It\^{o} integrals with respect to one-dimensional Brownian motions. This leads to solutions involving random functions taking values in suitable functional spaces.

The heat equation driven by a spatially homogeneous Wiener process
has been studied in \cite{pz}, where necessary and sufficient conditions for existence of a unique function-valued solution to the corresponding Cauchy problem in (exponentially) weighted Sobolev spaces are given. To our best knowledge, the most general result of existence and uniqueness of a function-valued solution to semilinear parabolic SPDEs is given in \cite{peszat}, where the author considers a stochastic heat-like equation having an elliptic second order operator with $x$-dependent coefficients instead of the Laplacian, and proves that there exists a unique solution in (exponentially) weighted Sobolev spaces under the assumption
\beqs\label{pezheat}
\exists \ell<1:\ \sup_{\eta\in\R^d}\int_{\R^d}\frac{\mathfrak{M}(d\xi)}{1+|\xi+\eta|^{2\ell}}<\infty.
\eeqs
Notice also the recent paper \cite{zhang}, where the authors consider quasilinear stochastic parabolic equations of the second order in diffusion form with $x$-dependent coefficients.

\vskip+0.2cm
In the present paper we show existence and uniqueness of a function-valued solution to a wide class of semilinear parabolic SPDEs of arbitrary order $\mu\geq 1$, with possibly unbounded coefficients, depending on $(t,x)\in[0,T]\times\R^d$, $d\geq 1$. The condition \eqref{ourheat} below, that we assume on the spectral measure, extends \eqref{pezheat} to our general setting, see Remark  \ref{remPeszgen} for more comments.
\vskip+0.2cm
More precisely, we consider the class of semilinear parabolic SPDES of the form \eqref{eq:SPDE}, with
\beqs\label{elle}
L=\partial_t + A(t),\qquad [A(\cdot)u](t,x)=\displaystyle\sum_{|\alpha|\leq \mu}a_{\alpha}(t,x)\, (D_x^\alpha u)(t,x),
\eeqs
where the $(t,x)$-dependent coefficients $a_\alpha$, defined on $[0,T]\times\Rd$, admit, at most, a polynomial growth as $|x|\to\infty$.  
That is, for arbitrary $m,\mu>0$, we assume $a_{\alpha}\in C([0,T], C^\infty(\R^d))$, $|\alpha|\le \mu$, and,
for all $\beta\in\N_0^d=(\N\cup\{0\})^d$, there exists a constant $C_{\alpha \beta}>0$ such that
\[
	|\partial^\beta_x a_{\alpha}(t,x)|\le C_{\alpha \beta} \x^{m-|\beta|},
\]
for all $(t,x)\in[0,T]\times\R^d$, where $\x:=(1+|x|^2)^{1/2}$.
The parabolicity of $L$ means that the parameter-dependent symbol $a(t,x,\xi)$ of the $SG$-operators family $A(t)$, defined here below, satisfies
\begin{equation}\label{roots}
	a(t,x,\xi):=\sum_{|\alpha|\le \mu}{a}_{\alpha}(t,x)\xi^\alpha\ge C \x^{m'} \csi^{\mu'},
\end{equation}
with $C>0$, $m\ge m'>0$, $\mu\ge\mu'>0$, that is, $a$ is $SG$-hypoelliptic. Postponing to the next Section \ref{sec:sgcalc} the precise characterization,
we give here an example.
\begin{example}
An example of a $SG$-parabolic operator $L$ is the generalized $SG$-heat operator, defined for every $m,\mu\in\N\setminus\{0\}$ by
\[
	L=\partial_t+\x^{2m}\langle D\rangle^{2\mu},
	\qquad x\in\R^d.
\]
In this case $m=m'$, $\mu=\mu'$, that is, $a$ is $SG$-elliptic. Operators of this form appear, for instance, as local representations of (modified) heat-type operators of the form
$L_{\mathfrak{g}}=\partial_t-\Delta_{\mathfrak{g}}+V$ on a manifold with ends, under an appropriate choice of the metric $\mathfrak{g}$ and of the potential $V$. Here we just sketch 
the construction, see \cite{CJT}, Example 5.21, for the details about the Laplace-Beltrami operator $\Delta_{\mathfrak{g}}$ associated with $\mathfrak{g}$, in relation with (modified)
wave operators in this context (see also \cite{CD}, Remark 8, for the analog example in relation with elliptic operators in the more general setting of scattering manifolds). 
As model of an ``end'', consider the cylinder $C=\calS^1\times(1,+\infty)$, $\calS^1$ the unit circle in $\R^2$, and put on $C$ the metric $\mathfrak{g}$ obtained by pulling back the 
metric $\mathfrak{h}$ on $\R^3$ given by $\mathfrak{h}=\frac14 \mathrm{diag}(z^2\langle z\rangle^{-2m}, z^2\langle z\rangle^{-2m}, 4\langle z\rangle^{-2m})$, $m>0$. Then, taking suitable local 
coordinates $x=(x_1,x_2)\in\R^2$, it follows that the Laplace-Beltrami operator $\Delta_\mathfrak{g}$ is given by $\Delta_\mathfrak{g}=(1+x_1^2+x_2^2)^{m}(\partial_{x_1}^2+\partial_{x_2}^2)=\langle x\rangle^{2m}\Delta$. 
Thus, if we choose the potential $V(x)=\langle x\rangle^{2m}$ in local coordinates, we obtain that
\[
	L_\mathfrak{g}=\partial_t-\Delta_\mathfrak{g}+V=\partial_t-\langle x\rangle^{2m}\Delta+\langle x\rangle^{2m}=\partial_t+\langle x\rangle^{2m} \langle D\rangle^{2}.
\]
\end{example}

We study SPDEs of the form \eqref{eq:SPDE}, \eqref{elle}, \eqref{roots}, and we derive conditions on the right-hand side terms $\gamma$ and $\sigma$, and on the spectral measure $\mathfrak{M}$ (hence, on $\dot{\Xi}$), such that there exists a function-valued (mild) solution to the corresponding Cauchy problem.

\medskip

As customary for the classes of the associated deterministic PDEs, we are interested in the present paper in both the smoothness, as well as the decay/growth  at spatial infinity of the solutions. 
Therefore the Cauchy data are going to be taken in the so-called Sobolev-Kato spaces
\begin{equation}\label{eq:skspace}
  	H^{z,\zeta}(\R^d)= \{u \in \caS^\prime(\R^{n}) \colon \|u\|_{z,\zeta}=
	\|{\jap}^z\pdd^\zeta u\|_{L^2}< \infty\}, \quad (z,\zeta)\in\R^2.
\end{equation}
The coefficients $\gamma,\sigma$ will be chosen in suitable classes of Lipschitz functions, denoted by
$\mathrm{Lip_{loc}}(z,\zeta,r,\rho)$ and defined here below.

\begin{definition}\label{def:lip}
The class $\mathrm{Lip}(z,\zeta,r,\rho)$, for given $z,\zeta,r,\rho\in\R$, $r,\rho\ge0$,
consists of all measurable functions $g:[0,T]\times\R^d\times\R\longrightarrow\C$ such that
there exists a real-valued, non negative,
$C_t=C(t)\in C([0,T])$, fulfilling the following:
\begin{itemize}
\item for every $v\in H^{z+r,\zeta+\rho}(\R^d)$, $t\in[0,T]$, we have
$\|g(t,\cdot,v)\|_{z,\zeta}\leq C(t)(1+\|v\|_{z+r,\zeta+\rho})$;
\item for every $v_1,v_2\in H^{z+r,\zeta+\rho}(R^d)$, $t\in[0,T]$, we have
$\|g(t,\cdot,v_1)-g(t,\cdot,v_2)\|_{z,\zeta}\leq C(t)\|v_1-v_2\|_{z+r,\zeta+\rho}$.
\end{itemize}
More generally, we say that $g\in\mathrm{Lip_{loc}}(z,\zeta,r,\rho)$ if the stated properties hold true for $v_1,v_2\in U$,
with $U$ a suitable open subset of $H^{n,\nu}(\R^d)$, for some $n\ge z+r$, $\nu\ge \zeta+\rho$ (typically, a sufficiently small neighbourhood of the initial data of the Cauchy problem).
\end{definition}

\begin{remark}\label{rem:lip}
Let $g:[0,T]\times\R^d\times\R\longrightarrow\R$ be measurable and
$\zeta=\rho=0$. Assume that
there exists a real-valued, non negative,
$C_t=C(t)\in C([0,T])$, satisfying
\begin{itemize}
\item for every $w\in\R$, $x\in\R^d$, $t\in[0,T]$, we have
$|g(t,x,w)|\leq C(t)(|\kappa(x)|+|w|)$, for some $\kappa \in H^{z,0}(\R^d)$, and
\item for every $w,v\in\R$, $x\in\R^d$, $t\in[0,T]$, we have  $|g(t,x,w)-g(t,x,v)|\leq C(t)|w-v|$.
\end{itemize}
Then, $g\in\mathrm{Lip}(z,0,r,0)$ for every $r\geq 0$. In fact, for some $C>0$,
	\begin{align*}
		\| g(t,\cdot,w) \|_{z,0}^2&=
		\| \jap^z g(t,\cdot,w) \|_{L^2}^2\leq C_t^2\|
		\jap^z (|\kappa|+|w|) \|_{L^2}^2
		\\
		&\le 2C_t^2(\|\kappa\|^2_{z,0}+\|w\|^2_{z,0})
		\le C^2 C_t^2(1+\|w\|_{z+r,0})^2,
	\end{align*}
	and similarly for the Lipschitz continuity with respect to the third variable, cfr. \cite{peszat}.
\end{remark}

\begin{remark}\label{rem:intpowers}
	Let $g(t,x,w)=w^n$, $n\in\N$. Then
	$g\in\mathrm{Lip_{loc}}(z,\zeta,r,\rho)$,
	when $z,r,\rho\ge0$,
	$\zeta>\frac{d}{2}$. In fact,
	when $w\in H^{z+r,\zeta+\rho}(\R^d)$ is such that $\|w\|_{z+r,\zeta+\rho}\le R$,
	$$
	\| w^n \|_{z,\zeta}\le C \| w^n \|_{nz,\zeta}\le C \|w \|_{z,\zeta}^n\le
	\widetilde{C} R^{n-1}\|w\|_{z+r,\zeta+\rho},
	$$
	for the algebra properties of the Sobolev-Kato spaces, see e.g.\ \cite[Proposition 2.2]{AC06}.
\end{remark}

The results proved in this paper
expand the theory developed in \cite{ACS19b} to the case of semilinear operators $L$ which are parabolic and whose coefficients are not uniformly bounded, and expand (in the semilinear case) the results of \cite{peszat} to the case of space-dependent coefficients with polynomial growth and of arbitrary order equations. Our main result reads as follows (see Sections
\ref{sec:sgcalc}, \ref{sec:nonlin}, and Theorem \ref{thm:linearcm} below for the precise definitions and statement).

\begin{maintheorem}\label{thm:main}
Let us consider the Cauchy problem
\begin{equation}\label{eq:cpintro}
	\begin{cases}
	Lu(t,x) = \gamma(t,x,u(t,x)) + \sigma(t,x,u(t,x))\dot{\Xi}(t,x),& (t,x)\in(0,T]\times\R^d,
 	\\
	\hspace*{1.3mm}u(0,x)=u_0(x),& x\in\R^d,
	\end{cases}
\end{equation}
for an SPDE associated with an SG-parabolic operator $L$ of the form \eqref{elle}, \eqref{roots} with $m\ge m'>0$, $\mu\ge\mu'>0$,
and $u_0\in H^{z, \zeta}(\Rd)$, $z,\zeta\in\R$.
Assume that $\gamma,\sigma\in\mathrm{Lip_{loc}}(z-\kappa m',\zeta,\kappa m',0)$ for some $\kappa\in[0,\onehalf)$, and that
         \beqs\label{ourheat}
         \exists \lambda\in[0,\onehalf):\ 	\sup_{\eta\in\Rd}\int_\Rd \frac{\mathfrak{M}(d\xi)}{\langle\xi+\eta\rangle^{2\lambda\mu'}}<\infty.
	\eeqs
%
Then, there exists a time horizon $0< T_0\leq T$ such that the Cauchy problem
\eqref{eq:cpintro} admits a unique solution $u\in L^2([0,T_0]\times\Omega, H^{z,\zeta}(\R^d))$ satisfying \eqref{eq:mildsolutionSPDE}, where the first integral is a Bochner integral, and the second integral is understood as the stochastic integral of a suitable $H^{z,\zeta}(\R^d)$-valued stochastic process with respect to the stochastic noise $\Xi$.
\end{maintheorem}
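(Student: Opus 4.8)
The plan is to recast the Cauchy problem \eqref{eq:cpintro} as a fixed-point problem for the integral operator $\mathcal{T}$ defined by the right-hand side of \eqref{eq:mildsolutionSPDE}, acting on the Banach space $X_{T_0}=L^2([0,T_0]\times\Omega,H^{z,\zeta}(\R^d))$, and to solve it by Banach's contraction principle (equivalently, Picard iteration) after choosing the time horizon $T_0$ sufficiently small. Writing $E(t,s)$ for the solution operator of $Lw=0$ associated with the kernel $\Lambda(t,s,x,y)$ (an $SG$-Fourier integral operator, constructed in Section \ref{sec:fio}), I set $v_0(t)=E(t,0)u_0$ and
\[
\mathcal{T}u(t)=v_0(t)+\int_0^t E(t,s)\,\gamma(s,\cdot,u(s))\,ds+\int_0^t E(t,s)\,\big[\sigma(s,\cdot,u(s))\,\dot\Xi(s)\big]\,ds,
\]
interpreting the last term as a Da Prato--Zabczyk stochastic integral (see \cite{dapratozabczyk}) of an $H^{z,\zeta}$-valued process against the cylindrical Wiener process canonically associated with $\dot\Xi$ on the reproducing kernel Hilbert space $\caH$ of the spectral measure $\mathfrak{M}$. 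A fixed point of $\mathcal{T}$ is exactly a mild solution in the required class.

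The decisive analytic input is the parabolic smoothing of the propagator: since $L$ is $SG$-hypoelliptic with $a\ge C\x^{m'}\csi^{\mu'}$, the operator $E(t,s)$ gains regularity in both $SG$-orders, i.e.\ for every $\theta\ge0$ one has $\|E(t,s)f\|_{z+\theta m',\zeta+\theta\mu'}\le C(t-s)^{-\theta}\|f\|_{z,\zeta}$ uniformly on $[0,T]$ (this is where the deterministic theory of Section \ref{sec:fio} is used). For the drift term I would combine this with $\gamma\in\Liploc(z-\kappa m',\zeta,\kappa m',0)$: the latter yields $\|\gamma(s,\cdot,u(s))\|_{z-\kappa m',\zeta}\le C_s(1+\|u(s)\|_{z,\zeta})$, and smoothing with $\theta=\kappa$ recovers the lost spatial order $\kappa m'$, giving $\|E(t,s)\gamma(s,\cdot,u(s))\|_{z,\zeta}\le C(t-s)^{-\kappa}(1+\|u(s)\|_{z,\zeta})$. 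Since $\kappa<\onehalf<1$, the kernel $(t-s)^{-\kappa}$ is integrable, and Young's convolution inequality controls the $X_{T_0}$-norm of the drift integral by a constant of size $O(T_0^{1-\kappa})$ times $1+\|u\|_{X_{T_0}}$, which is small for small $T_0$.

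The main obstacle is the stochastic convolution. Here I would invoke the It\^o isometry of the Da Prato--Zabczyk calculus, $\E\|\int_0^tE(t,s)[\sigma(s,\cdot,u(s))\dot\Xi(s)]\,ds\|_{z,\zeta}^2=\int_0^t\E\,\|E(t,s)\circ M_{\sigma(s,\cdot,u(s))}\|_{\caL_2(\caH,H^{z,\zeta})}^2\,ds$, reducing matters to the Hilbert--Schmidt norm into $H^{z,\zeta}$ of the composition of the propagator with multiplication by $\sigma(s,\cdot,u(s))$ and the embedding of $\caH$. The genuinely new difficulty relative to \cite{peszat} is that $\Lambda$ is \emph{not} a convolution kernel $\Lambda(t-s,x-y)$: the coefficients are variable, so $E(t,s)$ is a Fourier integral operator rather than a Fourier multiplier, and its Hilbert--Schmidt norm cannot simply be read off a single radial symbol. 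I expect the resolution to proceed by choosing a smoothing exponent $\theta\in[\max(\kappa,\lambda),\onehalf)$ and exploiting the parabolic estimate in two ways at once: the spatial gain $\theta m'\ge\kappa m'$ compensates the order lost through $\sigma\in\Liploc(z-\kappa m',\zeta,\kappa m',0)$, while the Fourier gain $\theta\mu'$ converts the Hilbert--Schmidt sum into an integral carrying, after squaring, the weight $\langle\xi+\eta\rangle^{-2\theta\mu'}$, the spatial variable entering only through the shift $\eta$ precisely because translation invariance fails. As $2\theta\mu'\ge2\lambda\mu'$, hypothesis \eqref{ourheat} then bounds $\sup_{\eta}\int_\Rd\langle\xi+\eta\rangle^{-2\theta\mu'}\mathfrak{M}(d\xi)<\infty$, yielding $\E\,\|E(t,s)\circ M_{\sigma(s,\cdot,u(s))}\|_{\caL_2(\caH,H^{z,\zeta})}^2\le C(t-s)^{-2\theta}(1+\E\|u(s)\|_{z,\zeta}^2)$. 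Since $\theta<\onehalf$, the kernel $(t-s)^{-2\theta}$ is integrable, and after integrating in $t$ the stochastic term is controlled in $X_{T_0}$ with a constant of size $O(T_0^{1-2\theta})$. The requirement $\max(\kappa,\lambda)<\onehalf$, i.e.\ both $\kappa<\onehalf$ and $\lambda<\onehalf$, is exactly what makes the admissible range of $\theta$ nonempty.

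Finally, running the same two estimates on differences --- now with the Lipschitz rather than the growth clauses of Definition \ref{def:lip} --- gives $\|\mathcal{T}u-\mathcal{T}w\|_{X_{T_0}}\le\delta(T_0)\|u-w\|_{X_{T_0}}$ with $\delta(T_0)\to0$ as $T_0\to0$, so that $\mathcal{T}$ is a contraction on $X_{T_0}$ for $T_0$ small. The growth clauses, together with the uniform bound $\|E(t,0)u_0\|_{z,\zeta}\le C\|u_0\|_{z,\zeta}$ controlling the initial term $v_0$, ensure that $\mathcal{T}$ maps a suitable closed ball of $X_{T_0}$ into itself; this is the step at which the \emph{local} character of $\Liploc$ is respected, by adapting the radius of the ball and $T_0$ to a neighbourhood $U$ of $u_0$ on which the estimates of Definition \ref{def:lip} are valid. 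Banach's fixed-point theorem then produces the unique fixed point, that is, the claimed mild solution $u\in L^2([0,T_0]\times\Omega,H^{z,\zeta}(\R^d))$.
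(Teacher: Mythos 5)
Your proposal is correct and follows essentially the same route as the paper: a Banach fixed-point argument in $L^2([0,T_0]\times\Omega,H^{z,\zeta}(\R^d))$, driven by the parabolic smoothing estimate $\|E(t,s)\|\lesssim (t-s)^{-\theta}$ on the Sobolev--Kato scale (the paper's Lemma \ref{lem:ref}, estimate \eqref{ordinebis}) and by the Hilbert--Schmidt bound for $\psi\mapsto E(t,s)\sigma(s,\cdot,w)\psi$ in terms of $\sup_\eta\int\langle\xi+\eta\rangle^{-2\lambda\mu'}\mathfrak{M}(d\xi)$ (the paper's Lemma \ref{lem:weightedpesz2}, proved exactly by the shift-of-variable/Bessel mechanism you anticipate). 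The only cosmetic discrepancy is that in the parabolic setting $E(t,s)$ is a pseudodifferential (not Fourier integral) operator family, which does not affect the argument.
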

\begin{remark}\label{remPeszgen} Condition \eqref{ourheat} is consistent with the one in \cite[Remark 3.1]{peszat}, where the author finds, for parabolic equations associated with an elliptic operator of second order ($\mu=\mu'=2$), the condition \eqref{pezheat}. Indeed, the latter corresponds to the special case of \eqref{ourheat} with $\mu'=2$ and $2\lambda=\ell$.
The hypothesis \eqref{ourheat} can be understood as a {\emph {compatibility condition}} between the noise and the equation: as the order of the equation increases, we can allow for rougher stochastic noises $\Xi$.
\end{remark}
\begin{remark}We remark that, if $z\geq 0$ and $\zeta>d/2$, then $H^{z,\zeta}(\R^d)$ is embedded into the space $C_b(\R^d)$ of continuous and bounded functions on $\R^d$. Consequently, $u\in L^2([0,T_0]\times\Omega, C_b(\R^d))$, and can be pointwise evaluated at $x\in\R^d$.
\end{remark}

\medskip
We now give two examples of diffusion coefficients
$\sigma$ that we can allow in \eqref{eq:cpintro}.

\begin{example}
	$\sigma(t, x, u)=u^2$ is an admissible non-linearity
	for the equations we consider. More generally, we allow
	$\sigma(t, x, u)=u^n$, $n\in\N$, $n>2$, see Remark
	\ref{rem:intpowers}.
\end{example}

\begin{example}\label{ex:sigmanonlin}
	A right-hand side explicitly depending on $(t,x)\in[0,T]\times\R^d$
	and $u$, which
	is admissible for the equations we consider, is
	\beqs\label{expower}
		\sigma(t,x,u)=\langle x\rangle^{-\kappa m'}\cdot\widetilde{\sigma}(t,u),
	\eeqs
	where $\kappa\in [0,\onehalf)$, $m'$ in \eqref{roots} and $\widetilde{\sigma}\in \mathrm{Lip_{loc}}(z+\kappa m',\zeta,0,0)$. The function $\sigma$ in \eqref{expower} is indeed an element of $\mathrm{Lip_{loc}}(z,\zeta, \kappa m',0)$, as required in our main Theorem since, for every $w$ in a sufficiently small subset $U\subset H^{z+\kappa m',\zeta}(\Rd)$, we have
	\[||\sigma(t,\cdot,w)||_{z,\zeta}=||\tilde\sigma(t,\cdot,w)||_{z+\kappa m',\zeta}\leq C(t)\left(1+||w||_{z+\kappa m',\zeta}\right),\]
	and the verification of $||\sigma(t,\cdot,w_1)-\sigma(t,\cdot, w_2)||_{z,\zeta}\leq C(t)||w_1-w_2||_{z+\kappa m',\zeta}$ follows similarly.\\
	\indent To our best knowledge, a diffusion coefficient of the rather
	general form \eqref{expower} has been sistematically treated
	in the literature in \cite{ACS19b}, after the paper \cite{sanzvuillermot}, where, for $m=2$,
	it has been incorporated in a certain model equation by means
	of ad-hoc techniques.
\end{example}

\begin{example}
	More generally, an extension of
	the theory developed in the present paper allows for a stochastic
	term of the very general form
	$$\sigma(t,x,u,D_xu,\ldots,D_x^\alpha u),\qquad|\alpha|\leq m-1,$$
	in the right-hand side of \eqref{eq:SPDE}. The only difference consists
	in the form of the Lipschitz-continuity assumptions and the corresponding
	mapping properties.
\end{example}

The main tools for proving the existence of function-valued solutions to \eqref{eq:SPDE} will be pseudodifferential operators with symbols in the so-called $SG$ classes. Such symbol classes have been introduced in the '70s by H.O. Cordes (see, e.g. \cite{cordes}) and C. Parenti \cite{PA72} (see also R. Melrose \cite{ME}).

To construct the fundamental solution of the operator $L$ involved in \eqref{eq:SPDE}, we rely on the calculus of pseudo-differential operators of $SG$ type.
The proof of the main theorems of the paper employs such fundamental solution and a fixed point scheme in suitable functional spaces.

The strategy to prove the main theorem consists of the following steps:
\begin{enumerate}
\item\label{p:fundsol} construction of the fundamental solution of $L$ in \eqref{elle}, and then (formally) of the solution $u$ to \eqref{eq:cpintro};
\item application of a Banach fixed point scheme.
\end{enumerate}

For point \eqref{p:fundsol} we need, on one hand, to perform compositions between pseudodifferential operators, using the theory developed, e.g., in \cite{cordes}, and, on the other hand, the construction of the fundamental solution of parabolic operators in the $SG$ environment. The latter has been achieved, with precise information about the order of the pseudodifferential operator family $E(t,s)$ that defines the fundamental solution of $L$, in \cite{linearpara}.
\medskip

With the aim of giving a presentation as self-contained as possible, for the convenience of the reader, we provide
various preliminaries from the existing literature. The paper is organized as follows.
In Section \ref{sec:stochastics} we recall some notions about stochastic integration.
In Section \ref{sec:sgcalc} we first give a brief summary of the main tools, coming from microlocal analysis, that we use for the construction of the fundamental solution operator and of its kernel $\Lambda(t,s,x,y)$ (these results come mainly from \cite{cordes}). Subsequently, we recall the construction of the fundamental solution of the $SG$-parabolic operator $L$, referring to \cite{linearpara} for all the proofs.
In Section \ref{sec:nonlin} we focus on the parabolic SPDE \eqref{eq:SPDE}, \eqref{elle}, \eqref{roots}, and
prove our main theorem, under appropriate assumptions (see Theorem \ref{thm:linearcm}).
In the final Section 5 we provide a comparison, in the linear case $\gamma=\gamma(t,x), \sigma=\sigma(t,x)$, between the function-valued solution to the Cauchy problem \eqref{eq:cpintro}, obtained in Section \ref{sec:nonlin}, and the random-field solution of the corresponding linear problem, recently obtained in \cite{linearpara}, showing that the two solutions turn out to coincide.

\section*{Acknowledgement}{This research has been partially supported by the first author's INdAM-GNAMPA Project 2020.}
%
\section{Stochastic integration.}\label{sec:stochastics}

\subsection{Stochastic integration with respect to a cylindrical Wiener process.} 

\begin{definition}\label{cWp}
  Let $Q$ be a self-adjoint, nonnegative definite and bounded linear operator on a separable Hilbert space $H$. An $H$-valued stochastic process $W = \{W_t(h); h\in H, t\geq0\}$ is called a {\em cylindrical Wiener process on $H$} on the complete probability space $(\Omega,\scrF,\P)$ if the following conditions are fulfilled:
  \begin{enumerate}
    \item for any $h\in H$, $\{W_t(h); t\geq0\}$ is a one-dimensional Brownian motion with variance $t\langle Qh,h\rangle_H$;
    \item for all $s,t\geq0$ and $g,h\in H$,
    \[ \E[W_s(g)W_t(h)] = (s\wedge t)\langle Qg,h\rangle_H. \]
  \end{enumerate}
  If $Q=Id_H$, then $W$ is called a standard cylindrical Wiener process.
\end{definition}

Let $\scrF_t$ be the $\sigma$-field generated by the random variables $\{W_t(h); 0\leq s\leq t, h\in H\}$ and the $\P$-null sets. The predictable $\sigma$-field is then the $\sigma$-field in $[0,T]\times\Omega$ generated by the sets $\{(s,t]\times A, A\in\scrF_t, 0\leq s<t\leq T\}$.

We define $H_Q$ to be the completion of the Hilbert space $H$ endowed with the inner product
\[ \langle g,h\rangle_{H_Q} := \langle Qg,h\rangle_H, \]
for $g,h\in H$. In the sequel, we let $\{v_k\}_{k\in\N}$ be a complete orthonormal basis of $H_Q$. Then, the stochastic integral of a predictable, square-integrable stochastic process with values in $H_Q$, $u\in L^2([0,T]\times\Omega; H_Q)$, is defined as
\[ \int_0^t u(s)dW_s := \sum_{k\in\N} \langle u,v_k\rangle_{H_Q} dW_s(v_k). \]
In fact, the series in the right-hand side converges in $L^2(\Omega,\scrF,\P)$ and its sum does not depend on the chosen orthonormal system
$\{v_k\}_{k\in\N}$. Moreover, the It\^o isometry
\[ \E\bigg[\bigg(\int_0^t u(s)dW_s\bigg)^2\bigg] = \E\bigg[\int_0^t \|u(s)\|_{H_Q}^2 ds\bigg] \]
holds true for any $u\in L^2([0,T]\times\Omega;H_Q)$.

This notion of stochastic integral can also be extended to operator-valued integrands. Let $\caH$ be a separable Hilbert space and consider $L_2(H_Q,\caH)$, the space of Hilbert-Schmidt operators from $H_Q$ to 
$\caH$. With this we can define the space of integrable processes (with respect to $W$) as the set of $\scrF$-measureable processes in $L^2([0,T]\times\Omega;L_2(H_Q,\caH))$. Since one can identify the Hilbert-Schmidt operators in $L_2(H_Q,\caH)$ with $\caH\otimes H_Q^*$, one can define the stochastic integral for any $u\in L^2([0,T]\times\Omega;L_2(H_Q,\caH))$ coordinatewise in $\caH$. Moreover, it is possible to establish an It\^o isometry, namely,
\beqs\label{isomhilb}
\E\Bigg[\bigg\|\int_0^t u(s)dW_s\bigg\|_\caH^2\Bigg] := \int_0^t \E\big[\|u(s)\|_{L_2(H_Q,\caH)}^2\big] ds.
\eeqs

\subsection{The noise term}\label{sec:noise}
 In this paper we consider a distribution-valued
Gaussian process $\{\Xi(\phi);\; \phi\in\mathcal{C}_0^\infty(\mathbb{R}_+\times\Rd)\}$ on a complete probability space $(\Omega, \scrF, \P)$,
with mean zero and covariance functional given by
\begin{equation}
	\E[\Xi(\phi)\Xi(\psi)] = \int_0^\infty\int_\Rd \big(\phi(t)\ast\tilde{\psi}(t)\big)(x)\,\Gamma(dx) dt,
	\label{eq:correlation}
\end{equation}
where $\tilde{\psi}(t,x) := (2\pi)^{-d}\,\overline{\psi(t,-x)}$, $\ast$ is the convolution operator and $\Gamma$ is a nonnegative, nonnegative definite, tempered measure on $\Rd$ usually called \emph{correlation measure}.
Then \cite[Chapter VII, Th\'{e}or\`{e}me XVIII]{schwartz} implies that there exists a nonnegative tempered measure $\mathfrak{M}$ on $\Rd$, usually called \emph{spectral measure}, such that $\caF\Gamma = \widehat{\Gamma}=\mathfrak{M}$, where $\caF$ and $\widehat{\phantom d}$ denote the Fourier transform.
By Parseval's identity, the right-hand side of \eqref{eq:correlation} can be rewritten as
\begin{equation*}
	\E[\Xi(\phi)\Xi(\psi)] = \int_0^{\infty}\int_{\Rd}[\caF\phi(t)](\xi)\
	\cdot
	\overline{[\caF\psi(t)](\xi)}\,\mathfrak{M}(d\xi) dt.
\end{equation*}
\subsection{The Cameron-Martin space associated with the noise}\label{sec:CMspace}
To give a meaning to the stochastic integral appearing in \eqref{eq:mildsolutionSPDE}
as a stochastic integral with respect to a cylindrical Wiener process on a suitable Hilbert space, we need to understand the noise $\Xi$ in terms of a canonically associated Hilbert space $\mathcal H_\Xi$, the so-called Cameron-Martin space associated with $\Xi$.
\begin{definition}
The Cameron-Martin space of $\Xi$ is the set
\begin{equation}\label{cammart}
\mathcal H_\Xi=\{\widehat{\varphi\mathfrak{M}}\colon\varphi\in L^2_{\mathfrak{M},s}(\R^d)\},
\end{equation}
where $L^2_{\mathfrak{M},s}$ is the space of symmetric functions in $L^2_\mathfrak{M}$, i.e.
$\check{\varphi}(x)=\varphi(-x)=\varphi(x)$, $x\in\R^d$, and $\ds\int_{\R^d}|\varphi(x)|^2\,\mathfrak{M}(dx)<\infty$.
\end{definition}
Clearly, $\mathcal H_\Xi\subset\mathcal S'(\R^d)$. The Cameron-Martin space $\mathcal H_\Xi$, endowed with the inner product
\[
\langle\widehat{\varphi\mathfrak{M}},\widehat{\psi\mathfrak{M}}\rangle_{\mathcal H_\Xi}:=\langle\varphi,\psi\rangle_{L^2_{\mathfrak{M},s}}, \quad \forall\varphi,\psi\in L^2_{\mathfrak{M},s}(\R^d),
\]
and the corresponding norm \[||\widehat{\varphi\mathfrak{M}}||_{\mathcal H_\Xi}^2=||\varphi||_{L^2_{\mathfrak{M},s}}^2,\]
turns out to be a real separable Hilbert space, see \cite[Propostition 2.1]{peszat}. Thus, $\Xi$ is a cylindrical  Wiener process $W$
on $(\mathcal H_\Xi,\langle\cdot,\cdot\rangle_{\mathcal H_\Xi})$ which takes values in any Hilbert space $\caH$ such that the embedding $\caH_\Xi\hookrightarrow \caH$ is an Hilbert-Schmidt map.
%
%

\subsection{The concept of solution to \eqref{eq:SPDE}}\label{sec:rigorous_solution}
With the preparation above, we can now give the precise meaning of solution to \eqref{eq:SPDE}.
\begin{definition}\label{def:rigorous_solution}
We call \textit{(mild) function-valued solution to \eqref{eq:cpintro}} an $L^2(\Omega,\caH)$-family of random elements $u(t)$, 
satisfying the stochastic integral equation

\begin{equation}
\label{eq:rigorous_solution}
	u(t) = v_0(t) + \int_0^t E(t,s)\gamma(s,u(s))ds + \int_0^t E(t,s)\sigma(s,u(s))dW_s,
\end{equation}

for all $t\in[0,T_0]$, $x\in\R^d$, where $T_0\in(0, T]$ is a suitable time horizon, 
$v_0(t)\in\caH$ for $t\in[0,T_0]$, $E(t,s)$ is the fundamental solution to $L$, $\gamma$ and $\sigma$ are nonlinear operators defined by the Nemytskii operators associated 
with the functions $\gamma$ and $\sigma$ in \eqref{eq:SPDE}, and $W$ is the cylindrical Wiener process defined in Section \ref{sec:CMspace}.
\end{definition}

\begin{remark}\label{rem:peslem} 
\begin{enumerate}
\item[i)] The fundamental solution $E(t,s)$ to $L$, described in Theorem \ref{thm:solfond}, is a family of linear operators with Schwartz kernel $\Lambda(t,s)$, $0\le s\le t\le T_0$. 
\item[ii)] It follows that, to ensure that the stochastic integral appearing in \eqref{eq:rigorous_solution} makes sense, it will be enough to verify that 
$E(t,\cdot)\sigma(\cdot,u(\cdot))\in L^2([0,T]\times\Omega; L_2(\caH_\Xi, \caH))$,
for a suitable separable Hilbert space $\caH$.
\end{enumerate}

\end{remark}

%
%
\section{Microlocal analysis and fundamental solution to parabolic equations with polynomially bounded coefficients}\label{sec:sgcalc}

\subsection{Elements of the $SG$-calculus} In this section we recall basic definitions and facts about
the so-called $SG$-calculus of pseudodifferential operators. 
In the sequel, we will often write $A\lesssim B$ when $|A|\le c\cdot |B|$, for a suitable constant $c>0$.

Given $(m,\mu) \in \R^2$, the class $S ^{m,\mu}=S ^{m,\mu}(\R^{d})$ of $SG$ symbols of order $(m,\mu)$ consists of all $a \in C^\infty(\R^d\times\R^d)$ such that, for any multiindices $\alpha,\beta \in \N_0^d$, there exist constants $C_{\alpha\beta}>0$ such that
\begin{equation}
	\label{eq:disSG}
	|D_x^{\alpha} D_\xi^{\beta} a(x, \xi)| \leq C_{\alpha\beta}
	\x^{m-|\alpha|}\csi^{\mu-|\beta|},
	\qquad (x, \xi) \in \R^d \times \R^d.
\end{equation}
For $m,\mu\in\R$, $S^{m,\mu}$ is a Fr\'echet space with the following family of seminorms:
\begin{equation}\label{seminorms}
	\vvvert a \vvvert^{m,\mu}_\ell
	=
	\max_{|\alpha+\beta|\le \ell}\sup_{x,\xi\in\R^d}\x^{-m+|\alpha|}
	                                                                     \csi^{-\mu+|\beta|}
	                                                                    | \partial^\alpha_x\partial^\beta_\xi a(x,\xi)|, \quad \ell\in\N_0,\ a\in S^{m,\mu}.
\end{equation}
The class of pseudodifferential operators $\Op (S ^{m,\mu})=\Op (S ^{m,\mu}(\R^d))$ with symbol in $S ^{m,\mu}$  is given by
\begin{equation}\label{eq:psidos}
	(\Op(a)u)(x)=(a(.,D)u)(x)=(2\pi)^{-d}\int e^{\ii x\xi}a(x,\xi)\hat{u}(\xi)d\xi, \quad a\in S^{m,\mu}(\R^d),u\in\caS(\R^d),
\end{equation}
extended by duality to $\caS^\prime(\R^d)$.
The operators in \eqref{eq:psidos} form a
graded algebra with respect to composition, i.e.,
$$
\Op (S ^{m_1,\mu _1})\circ \Op (S ^{m_2,\mu _2})
\subseteq \Op (S ^{m_1+m_2,\mu _1+\mu _2}).
$$
The symbol $c\in S ^{m_1+m_2,\mu _1+\mu _2}$ of the composed operator $\Op(a)\circ\Op(b)$,
$a\in S ^{m_1,\mu _1}$, $b\in S ^{m_2,\mu _2}$, admits the asymptotic expansion
\begin{equation}
	\label{eq:comp}
	c(x,\xi)\sim \sum_{\alpha}\frac{i^{|\alpha|}}{\alpha!}\,D^\alpha_\xi a(x,\xi)\, D^\alpha_x b(x,\xi),
\end{equation}
which implies that the symbol $c$ equals $a\cdot b$ modulo $S ^{m_1+m_2-1,\mu _1+\mu _2-1}$.
The residual elements of the calculus are operators with symbols in
\[
	 S ^{-\infty,-\infty}=S ^{-\infty,-\infty}(\R^{d})= \bigcap_{(m,\mu) \in \R^2} S ^{m,\mu} (\R^{d})
	 =\caS(\R^{2d}),
\]
that is, those having kernel in $\caS(\R^{2d})$, continuously
mapping  $\caS^\prime(\R^d)$ to $\caS(\R^d)$. For any $a\in S^{m,\mu}$, $(m,\mu)\in\R^2$,
$\Op(a)$ is a linear continuous operator from $\caS(\R^d)$ to itself, extending to a linear
continuous operator from $\caS^\prime(\R^d)$ to itself, and from
$H^{z,\zeta}(\R^d)$ to $H^{z-m,\zeta-\mu}(\R^d)$,
where $H^{z,\zeta}(\R^d)$,
$(z,\zeta) \in \R^2$, denotes the Sobolev-Kato (or \textit{weighted Sobolev}) space defined in \eqref{eq:skspace}
with the naturally induced Hilbert norm. When $z\ge z^\prime$ and $\zeta\ge\zeta^\prime$, the continuous embedding
$H^{z,\zeta}\hookrightarrow H^{z^\prime,\zeta^\prime}$ holds true, and it is compact when $z>z^\prime$ and $\zeta>\zeta^\prime$.

Moreover
\begin{equation}\label{eq:spdecomp}
	\bigcap_{z,\zeta\in\R}H^{z,\zeta}(\R^d)=H^{\infty,\infty}(\R^d)=\caS(\R^d),
	\quad
	\bigcup_{z,\zeta\in\R}H^{z,\zeta}(\R^d)=H^{-\infty,-\infty}(\R^d)=\caS^\prime(\R^d).
\end{equation}

The continuity property of
the elements of $\Op(S^{m,\mu})$ on the scale of spaces $H^{z,\zeta}(\R^d)$, $(m,\mu),(z,\zeta)\in\R^2$, is expressed
more precisely in the next Theorem \ref{thm:sobcont}.
\begin{theorem}\label{thm:sobcont}
	Let $a\in S^{m,\mu}(\R^d)$, $(m,\mu)\in\R^2$. Then, for any $(z,\zeta)\in\R^2$,
	$\Op(a)\in\scrL(H^{z,\zeta}(\R^d),H^{z-m,\zeta-\mu}(\R^d))$, and there exists a constant $C>0$,
	depending only on $d,m,\mu,z,\zeta$, such that
	\begin{equation}\label{eq:normsob}
		\|\Op(a)\|_{\scrL(H^{z,\zeta}(\R^d), H^{z-m,\zeta-\mu}(\R^d))}\le
		C\vvvert a \vvvert_{\left[\frac{d}{2}\right]+1}^{m,\mu},
	\end{equation}
	where $[t]$ denotes the integer part of $t\in\R$ and $\scrL(X, Y)$ stands for the space of linear and continuous maps from a space $X$ to a space $Y$.
\end{theorem}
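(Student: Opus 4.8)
The plan is to exploit the graded algebra structure of the $SG$-calculus to reduce the estimate \eqref{eq:normsob} for general orders $(m,\mu)$ and on the general scale $H^{z,\zeta}$ to the single base case of $L^2$-boundedness for symbols in $S^{0,0}$, and then to prove that base case with explicit control by the seminorm of order $[d/2]+1$. The reduction uses the order-reducing operators: for every $s\in\R$ one has $\x^s=\Op(\x^s)\in\Op(S^{s,0})$ and $\pdd^s=\Op(\csi^s)\in\Op(S^{0,s})$, and by the very definition \eqref{eq:skspace} the operator $\x^z\pdd^\zeta$ is an isometric isomorphism of $H^{z,\zeta}(\R^d)$ onto $L^2(\R^d)$, with inverse $\pdd^{-\zeta}\x^{-z}$.

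Concretely, for $u\in H^{z,\zeta}(\R^d)$ I would set $f=\x^z\pdd^\zeta u$, so that $\|u\|_{z,\zeta}=\|f\|_{L^2}$ and
\[
  \|\Op(a)u\|_{z-m,\zeta-\mu}=\big\|\x^{z-m}\pdd^{\zeta-\mu}\Op(a)\,\pdd^{-\zeta}\x^{-z}f\big\|_{L^2}=\|Bf\|_{L^2},
\]
where $B:=\x^{z-m}\,\pdd^{\zeta-\mu}\,\Op(a)\,\pdd^{-\zeta}\,\x^{-z}$. As $f$ ranges over all of $L^2$, this shows $\|\Op(a)\|_{\scrL(H^{z,\zeta},H^{z-m,\zeta-\mu})}=\|B\|_{\scrL(L^2)}$. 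By the composition rule \eqref{eq:comp} the bi-orders of the five factors add to $(z-m,0)+(0,\zeta-\mu)+(m,\mu)+(0,-\zeta)+(-z,0)=(0,0)$, so $B=\Op(b)$ with $b\in S^{0,0}$; the asymptotic expansion moreover bounds the relevant seminorm $\vvvert b\vvvert^{0,0}_{[d/2]+1}$ by a constant $C(d,m,\mu,z,\zeta)$ times a finite-order seminorm of $a$. It thus remains to estimate $\|\Op(b)\|_{\scrL(L^2)}$ by $\vvvert b\vvvert^{0,0}_{[d/2]+1}$.

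For this base case I would prove the Calder\'on--Vaillancourt-type inequality
\[
  \|\Op(b)\|_{\scrL(L^2)}\le C\,\vvvert b\vvvert^{0,0}_{[d/2]+1},\qquad b\in S^{0,0},
\]
via a Cotlar--Stein almost-orthogonality argument. The route is to decompose $b=\sum_{j,k}b_{jk}$ according to a dyadic partition of unity in the $\xi$-variable and, exploiting the symmetric $SG$-structure, also in the $x$-variable, and to set $T_{jk}=\Op(b_{jk})$. One then estimates the operator norms $\|T_{jk}^*T_{j'k'}\|$ and $\|T_{jk}T_{j'k'}^*\|$ by repeated integration by parts in the oscillatory kernel: each integration by parts in $\xi$ produces decay in the kernel separation variable $\langle x-y\rangle$ at the cost of one $\xi$-derivative of $b$, and symmetrically the $x$-integrations trade $x$-derivatives for frequency-block separation. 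Performing $[d/2]+1$ such steps yields off-diagonal bounds that are summable over the lattice $(j,k)$, the sum being governed precisely by $\vvvert b\vvvert^{0,0}_{[d/2]+1}$; the threshold $[d/2]+1$ is exactly the number of derivatives needed to render the almost-orthogonality series convergent, which is why it surfaces in \eqref{eq:normsob}. Alternatively one may invoke the corresponding statement directly from \cite{cordes}.

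I expect the Cotlar--Stein step to be the main obstacle: arranging the dyadic decomposition compatibly with both the $x$- and $\xi$-weights and verifying the almost-orthogonality estimates with the sharp derivative count requires careful bookkeeping of the oscillatory integrals, whereas the reduction in the first two paragraphs is routine symbolic calculus supported by the mapping properties already recorded in the text. Once the base estimate is established, combining it with the isometric reduction and collecting the constants yields \eqref{eq:normsob} with $C=C(d,m,\mu,z,\zeta)$, completing the proof.
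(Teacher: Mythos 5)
The paper does not prove this theorem: it is recalled verbatim from the $SG$-calculus literature (essentially \cite{cordes}), so there is no internal argument to compare yours against. Your overall strategy is the standard one and is sound in outline: the conjugation step is correct, since $\x^{z}\pdd^{\zeta}$ is an exact isometry of $H^{z,\zeta}(\R^d)$ onto $L^2(\R^d)$ with exact two-sided inverse $\pdd^{-\zeta}\x^{-z}$ (the products $\pdd^{\zeta}\pdd^{-\zeta}$ and $\x^{z}\x^{-z}$ being literally the identity in the order in which you compose them), and the composition calculus does place the conjugated symbol $b$ in $S^{0,0}$, so the problem is genuinely reduced to $L^2$-boundedness for zero-order $SG$ symbols.

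There are, however, two points where the sketch falls short of the statement as written. First, the reduction does not preserve the seminorm index: by \eqref{eq:comp}, controlling $\vvvert b\vvvert^{0,0}_{[d/2]+1}$ requires seminorms of $a$ of order strictly larger than $[d/2]+1$ in general (the expansion and its remainder consume additional derivatives of $a$), and you concede as much by writing ``a finite-order seminorm of $a$''. Consequently your argument delivers $\|\Op(a)\|\le C\vvvert a\vvvert^{m,\mu}_{N}$ for some finite $N=N(d,m,\mu,z,\zeta)\ge [d/2]+1$, which is all this paper ever uses, but it is not literally the inequality \eqref{eq:normsob}, whose right-hand side carries the specific index $[d/2]+1$; the sharp index is obtained in the literature by working directly with kernel estimates on the weighted spaces rather than by conjugation. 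Second, the Calder\'on--Vaillancourt/Cotlar--Stein step, which you correctly identify as the crux, is only described and not carried out; the assertion that derivatives with $|\alpha+\beta|\le[d/2]+1$ (in the sense of the seminorms \eqref{seminorms}) suffice is exactly the delicate quantitative point and would need either a complete argument or the explicit citation to \cite{cordes} that you offer as an alternative --- which is, in effect, what the paper itself does.
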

An operator $A=\Op(a)$ and its symbol $a\in S ^{m,\mu}$ are called \emph{elliptic}
(or $S ^{m,\mu}$-\emph{elliptic}) if there exists $R\ge0$ such that
\[
	C\x^{m} \csi^{\mu}\le |a(x,\xi)|,\qquad
	|x|+|\xi|\ge R,
\]
for some constant $C>0$. If $R=0$, $a^{-1}$ is everywhere well-defined and smooth, and $a^{-1}\in S ^{-m,-\mu}$.
If $R>0$, then $a^{-1}$ can be extended to the whole of $\R^{2d}$ so that the extension $\tilde{a}_{-1}$ satisfies $\tilde{a}_{-1}\in S ^{-m,-\mu}$.
An elliptic $SG$ operator $A \in \Op (S ^{m,\mu})$ admits a
parametrix $A_{-1}\in \Op (S ^{-m,-\mu})$ such that
\[
A_{-1}A=I + R_1, \quad AA_{-1}= I+ R_2,
\]
for suitable $R_1, R_2\in\Op(S^{-\infty,-\infty})$, where $I$ denotes the identity operator.
In such a case, $A$ turns out to be a Fredholm
operator on the scale of functional spaces $H^{z,\zeta}(\R^d)$,
$(z,\zeta)\in\R^2$.

%

\subsection{Fundamental solution of $SG$-parabolic operators}
In this section we consider operators of the form
\beqs\label{elle2}
	L=\partial_t+A(t)=\partial_t+\Op(a(t)),
\eeqs
where, for $m,\mu>0$, $A(t)=\Op(a(t))$ are $SG$ pseudodifferential operators with parameter-dependent symbol $a\in C([0,T], S^{m,\mu}(\R^d))$.
Notice that, of course, \eqref{elle} is a special case of \eqref{elle2}, so we are considering a class of operators with more general symbols than the (polynomial) ones appearing in \eqref{elle}.

The parabolicity condition on $L$ is here expressed by means of the ($SG$-)hypoellipticity of $A(t)$, namely,
\begin{equation}\label{eq:sghypoell}
	\begin{aligned}
	&\exists C>0\; \phantom{_{\alpha\beta}}\Re \, a(t,x,\xi)\ge C \x^{m^\prime} \csi^{\mu^\prime},
	\\
	\forall\alpha,\beta\in\N^d\;&\exists C_{\alpha\beta}>0\;
	\left|\frac{\partial^\alpha_x\partial^\beta_\xi a(t,x,\xi)}{\Re \, a(t,x,\xi)}\right|\le C_{\alpha\beta} \x^{-|\alpha|}\csi^{-|\beta|}.
	\end{aligned}
\end{equation}
where  $0<m^\prime\le m$, $0<\mu^\prime\le \mu$, $t\in[0,T]$, $x,\xi\in\R^d$. $A(t)$ is ($SG$-)elliptic
if $m=m^\prime$, $\mu=\mu^\prime$, see above. Elements of the microlocal analysis
of $SG$-parabolic operators can be found in \cite{cordes, MaPa}. As customary, $A(t)$, $t\in[0,T]$,
is considered as an unbounded operator in $L^2$ with dense domain $H^{m,\mu}$ (see \cite[Ch. 3, Sec. 3-4]{cordes};
see also \cite{MaPa} for the spectral theory of corresponding self-adjoint elliptic operators).

\begin{definition}\label{def:lpara}
	We say that $L=\partial_t + \Op(a(t))$, $a\in C([0,T], S^{m,\mu}(\R^d))$ is ($SG$-)parabolic, with respect to
	$m,m^\prime,\mu,\mu^\prime$, $0<m^\prime\le m$, $0<\mu^\prime\le\mu$, if $a$ satisfies the ($SG$-)hypoellipticity
	condition \eqref{eq:sghypoell}.
\end{definition}

The results stated in the present section come from \cite{linearpara}, where all proofs can be found. Here we recall the construction and the properties
of the fundamental solution to the operator $L$ in \eqref{elle2} under assumption \eqref{eq:sghypoell}, since they are necessary to prove our main result.

\begin{theorem}\label{thm:solfond}
	Let $L=\partial_t + \Op(a(t))$, $a\in C([0,T], S^{m,\mu}(\R^d))$ be
	($SG$-)parabolic, with respect to
	$m,m^\prime,\mu,\mu^\prime$, $0<m^\prime\le m$, $0<\mu^\prime\le\mu$.
	Then, $L$ admits a fundamental solution operator $E(t,s)$,
	$0\le s\le t\le T$, $0\le s<T$, that is, an operator family $E(t,s)=\Op(e(t,s))$ with
	$e(\cdot,s)\in C((s,T],S^{0,0}(\Rd)) \cap C^1((s,T],S^{m,\mu}(\Rd))$,
	with the following properties:
	\begin{enumerate}
		\item $E$ satisfies the equation $LE(t,s)=0, \quad 0\le s<t\le T;$
		%
		%
		\item the symbol family $e(t,s)$ satisfies $e(t,s,x,\xi)\rightarrow 1 \text{ weakly in $S^{0,0}(\Rd)$ for $t\to s^+$};$
		%
		%
		\item writing $e(t,s)$ as
		\begin{equation}\label{eq:uexp}
			e(t,s,x,\xi)=\exp\left(-\int_s^t a(\tau,x,\xi)\,d\tau \right)+r_0(t,s,x,\xi),
		\end{equation}
		the symbol family $r_0(t,s)$ satisfies
		\begin{align}\nonumber
			& r_0(t,s,x,\xi)\rightarrow0 \text{ weakly in $S^{-1,-1}(\Rd)$ for $t\to s^+$},
			\\\nonumber
			&\left\{\frac{r_0(t,s,x,\xi)}{t-s}\right\}_{0\le s< t\le T} \text{ is a bounded set in $S^{m-1,\mu-1}(\Rd)$}.
		\end{align}
	\end{enumerate}
\end{theorem}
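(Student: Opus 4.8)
The plan is to build $E(t,s)$ by a Levi (Duhamel) iteration whose leading term is the ``commutative'' solution obtained by treating the symbol $a(t,x,\xi)$ as a multiplier. First I would set
\[
	e_0(t,s,x,\xi)=\exp\left(-\int_s^t a(\tau,x,\xi)\,d\tau\right),
\]
the exact solution of the scalar Cauchy problem $\partial_t e_0+a\,e_0=0$, $e_0(s,s)=1$, and prove that $e_0(\cdot,s)\in C((s,T],S^{0,0}(\Rd))$. This is where parabolicity enters: every derivative $\partial_x^\alpha\partial_\xi^\beta e_0$ is a finite sum of terms $P\cdot e_0$, where $P$ is a product of derivatives of the phase $\Phi=\int_s^t a\,d\tau$. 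By the hypoellipticity estimates \eqref{eq:sghypoell} each factor $\partial_x^{\alpha'}\partial_\xi^{\beta'}\Phi$ is dominated by $\Re\Phi$ times the decaying weight $\x^{-|\alpha'|}\csi^{-|\beta'|}$, while the lower bound $\Re a\gtrsim\x^{m'}\csi^{\mu'}$ gives $\Re\Phi\ge C(t-s)\x^{m'}\csi^{\mu'}\ge0$. The elementary bound $\lambda^k\e^{-\lambda}\lesssim_k1$ for $\lambda\ge0$ then absorbs the polynomial growth of $P$ against the decay $\e^{-\Re\Phi}$ of $e_0$, yielding the seminorm estimates \eqref{seminorms} in $S^{0,0}$. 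The same computation shows $e_0(t,s)\to1$ weakly in $S^{0,0}$ as $t\to s^+$, which is property (2), and that $\partial_x^\alpha e_0=O(t-s)$ near the diagonal whenever $|\alpha|\ge1$.

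Next I would compute the defect. Since $\partial_t e_0=-a\,e_0$ and, by the composition formula \eqref{eq:comp}, the symbol of $\Op(a)\,\Op(e_0)$ is $a\# e_0\sim\sum_\alpha\frac{\ii^{|\alpha|}}{\alpha!}D_\xi^\alpha a\,D_x^\alpha e_0$, the order-zero term $a\,e_0$ cancels and
\[
	L\,\Op(e_0)=\Op(b_1),\qquad b_1\sim\sum_{|\alpha|\ge1}\frac{\ii^{|\alpha|}}{\alpha!}D_\xi^\alpha a\,D_x^\alpha e_0 .
\]
Crucially $b_1$ still carries the factor $e_0$, hence the parabolic decay $\e^{-\Re\Phi}$, and each $D_x^\alpha e_0$ with $|\alpha|\ge1$ contributes a factor $O(t-s)$. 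Tracking orders with the same $\lambda^k\e^{-\lambda}$ device shows that $b_1/(t-s)$ is bounded in $S^{m-1,\mu-1}$ and that $b_1\to0$ as $t\to s^+$; this is already the shape of the remainder in property (3).

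To turn the parametrix $\Op(e_0)$ into an exact fundamental solution I would use the ansatz
\[
	E(t,s)=\Op(e_0(t,s))+\int_s^t\Op(e_0(t,\theta))\,R(\theta,s)\,d\theta .
\]
Differentiating, using $\Op(e_0(t,t))=\id$ and $L\,\Op(e_0(t,\theta))=\Op(b_1(t,\theta))$, the condition $LE(t,s)=0$ becomes the Volterra equation
\[
	R(t,s)=-\Op(b_1(t,s))-\int_s^t\Op(b_1(t,\theta))\,R(\theta,s)\,d\theta ,
\]
which I would solve by the Neumann series $R=\sum_{n\ge1}(-1)^n B^{\ast n}$, where $B=\Op(b_1)$ and $\ast$ is the time convolution $(B\ast C)(t,s)=\int_s^t B(t,\theta)C(\theta,s)\,d\theta$. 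Convergence rests on two gains: the $n$-fold integration over the simplex $s\le\theta_n\le\cdots\le\theta_1\le t$ furnishes the factorial decay $(t-s)^n/n!$, and the exponential factors $\e^{-\Re\Phi}$ carried by consecutive defects telescope (their exponents add up over adjacent subintervals) to reproduce a single $\e^{-c(t-s)\x^{m'}\csi^{\mu'}}$, which controls the polynomially growing prefactors uniformly on $[s+\delta,T]$ for every $\delta>0$. Reading off $r_0=e-e_0$ from the correction integral then yields property (3): $r_0/(t-s)$ is bounded in $S^{m-1,\mu-1}$ and $r_0\to0$ weakly in $S^{-1,-1}$ as $t\to s^+$; the identity $LE=0$ gives property (1); and the $t$-regularity $C((s,T],S^{0,0})\cap C^1((s,T],S^{m,\mu})$ follows from the equation $\partial_t e=-a\# e\in S^{m,\mu}$.

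The hard part will be the uniform symbol-class bookkeeping in the iteration: one must show that, after composing the defects, the exponential parabolic decay survives each application of the $SG$-calculus and still dominates the polynomial growth produced by the $D_\xi^\alpha a$ factors, simultaneously in $(x,\xi)$ and down to the degenerate diagonal $t=s^+$, where the smoothing rate $(t-s)\x^{m'}\csi^{\mu'}$ vanishes. Balancing the exponential decay, the factorial time gain, and the growth of the seminorms under iterated composition — rather than the formal algebra of the Neumann series — is the genuine obstacle.
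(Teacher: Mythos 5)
Your overall strategy --- a one-term parametrix $\Op(e_0)$ with $e_0=\exp(-\int_s^t a\,d\tau)$, followed by a Volterra equation solved by a Neumann series --- is the classical Levi method, and your treatment of $e_0$ itself (derivatives of the phase $\Phi=\int_s^t a\,d\tau$ controlled by $\Re\,\Phi$ times decaying weights, then $\lambda^k e^{-\lambda}\lesssim 1$) is exactly the right use of the hypoellipticity condition \eqref{eq:sghypoell}; it is also how the construction the paper relies on (the proof is deferred to \cite{linearpara}) obtains property (2) and the estimate \eqref{ordine}. The genuine gap is in the iteration. Your defect $b_1\sim\sum_{|\alpha|\ge1}\frac{\ii^{|\alpha|}}{\alpha!}D_\xi^\alpha a\,D_x^\alpha e_0$ lies in $S^{m-1,\mu-1}$, a class of \emph{positive} order as soon as $m,\mu>1$. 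Hence the $n$-fold composition $B^{\ast n}$ has symbol of order $(n(m-1),n(\mu-1))$, growing linearly in $n$, and the factorial gain $(t-s)^n/n!$ from the simplex cannot by itself produce convergence in any fixed symbol class. The only rescue is the exponential factor $e^{-c(t-\theta)\x^{m'}\csi^{\mu'}}$, but showing that this factor survives each application of the $SG$ composition formula --- whose expansion \eqref{eq:comp} is only asymptotic, with integral remainders that are not simply products of the two symbols --- and that the resulting seminorm constants are summable in $n$, is precisely the step you label ``the genuine obstacle'' and do not carry out. This is not routine bookkeeping: it is the reason the construction behind the theorem proceeds differently, namely by building a \emph{full} asymptotic expansion $e\sim\sum_{j\ge0}e_j$ at the symbol level, with $e_j$ of order $(-j,-j)$ and each carrying (half of) the exponential decay, cf.\ Lemma \ref{lem:ref}; after summing asymptotically, the residual defect is smoothing, and the final Volterra correction involves only $S^{-\infty,-\infty}$ kernels, for which the Neumann series converges without any order-growth problem.

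A second, smaller inaccuracy: the claim that $b_1/(t-s)$ is bounded in $S^{m-1,\mu-1}$ cannot hold uniformly down to $t=s$. For $|\alpha|=1$ you have the two bounds $|\partial_x\Phi|\lesssim(t-s)\x^{m-1}\csi^{\mu}$ and $|\partial_x\Phi|\lesssim\Re\,\Phi\,\x^{-1}$; the first gives the factor $(t-s)$ but no order reduction after multiplying by $D_\xi^\alpha a\in S^{m,\mu-1}$ (you would need $\x^{m}\csi^{\mu}e^{-\Re\,\Phi}\lesssim1$, which fails in the regime $(t-s)\x^{m}\csi^{\mu}\sim1$ as $t\to s^+$), while the second gives membership in $S^{m-1,\mu-1}$ uniformly in $t,s$ but no factor $(t-s)$. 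The correct statement, and the one actually needed for property (3), is that $b_1$ is bounded in $S^{m-1,\mu-1}$ and tends to $0$ pointwise; the factor $(t-s)$ in $r_0/(t-s)$ then comes from the $d\theta$-integration in the Duhamel correction term, not from $b_1$ itself. Once that is repaired, your derivation of (1)--(3) from the (assumed) convergence of the series is coherent, but the theorem is not proved until the convergence issue above is settled.
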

\begin{remark}
	 It is enough that \eqref{eq:sghypoell} is satisfied for $|x|+|\xi|\ge R >0$. In fact, if this is the case, there exists $M>0$ such
	that $a_M(t,x,\xi)=a(t,x,\xi)+M$ satisfies \eqref{eq:sghypoell} everywhere. Let then $E_M(t,s)$ be the fundamental solution
	of $L_M=\partial_t + \Op(a_M(t))$. Then, $E(t,s)=e^{M(t-s)}E_M(t,s)$ is the fundamental solution of $L$ and
	\[
		e^{M(t-s)}e^{-\int_s^t[a(\tau)+M]\,d\tau}=e^{-\int_s^t a(\tau)\,d\tau},
	\]
	so $E(t,s)$ has the properties stated in Theorem \ref{thm:solfond}.
\end{remark}
By the construction performed in \cite{linearpara}, $e(t,s)$ and $E(t,s)$ are continuous also with respect to $s$, $0\le s\le t \le T$.
An immediate consequence of Theorem \ref{thm:solfond}, by a Duhamel's argument and the properties of the fundamental solution
$E$, is stated in the next Theorem \ref{thm:CPsol}.
\begin{theorem}\label{thm:CPsol}
	Let $u_0\in H^{z,\zeta}(\R^d)$, $f\in C([0,T], H^{z,\zeta}(\Rd))$, $z,\zeta\in\R$, and $L=\partial_t+A(t)$ satisfy the same
	assumptions as in Theorem \ref{thm:solfond}. Then, the Cauchy problem
	\beqs\begin{cases}\label{eq:cplin}
		Lu(t,x) = f(t,x), & (t,x)\in(s,T]\times\R^d,
		 \\
		\hspace*{1.3mm}u(s,x)=u_0(x),& x\in\R^d, s\in[0,T),
		\end{cases}
	\eeqs
	admits a solution given by
	\begin{equation}\label{eq:duhamel}
		u(t,x)=E(t,s)u_0(x)+\int_s^t E(t,\tau) \, f(\tau,x) \, d\tau, \quad s\le t\le T,
	\end{equation}
	with $E(t,s)$ the fundamental solution operator obtained in Theorem \ref{thm:solfond}. Moreover,
	such solution satisfies
	\[
		u\in C([s,T],H^{z,\zeta}(\Rd))\cap C^1([s,T],H^{z-m,\zeta-\mu}(\Rd)).
	\]
\end{theorem}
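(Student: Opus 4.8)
The plan is to verify, in two stages, that the Duhamel formula \eqref{eq:duhamel} both solves the Cauchy problem and enjoys the claimed regularity, relying throughout on the structural description of $E(t,s)=\Op(e(t,s))$ from Theorem \ref{thm:solfond} together with the continuity estimate \eqref{eq:normsob} of Theorem \ref{thm:sobcont}. The two properties I would use most heavily are that $\Op(S^{0,0})\subset\scrL(H^{z,\zeta})$ while $\Op(S^{m,\mu})\subset\scrL(H^{z,\zeta},H^{z-m,\zeta-\mu})$, and that the symbol $e(t,s)$ splits as in \eqref{eq:uexp} into an exponential main term plus a remainder $r_0$ that vanishes on the diagonal.

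First I would check that \eqref{eq:duhamel} solves the equation. For the homogeneous part, property (1) of Theorem \ref{thm:solfond} gives $L[E(t,s)u_0]=[LE(t,s)]u_0=0$ directly. For the particular part $v(t)=\int_s^t E(t,\tau)f(\tau)\,d\tau$, I would differentiate under the integral with a variable upper limit: the Leibniz rule produces the boundary contribution $\lim_{\tau\to t^-}E(t,\tau)f(t)$ plus $\int_s^t\partial_t E(t,\tau)f(\tau)\,d\tau$. Since $e(t,\tau)\to1$ weakly in $S^{0,0}(\Rd)$ as $\tau\to t^-$ (property (2)), one identifies $E(t,t)=\mathrm{id}$, so the boundary term equals $f(t)$; adding $A(t)v(t)=\int_s^t A(t)E(t,\tau)f(\tau)\,d\tau$ and recombining yields $Lv(t)=f(t)+\int_s^t[LE(t,\tau)]f(\tau)\,d\tau=f(t)$ again by property (1). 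The initial condition follows from the same identity $E(s,s)=\mathrm{id}$, giving $u(s,\cdot)=E(s,s)u_0=u_0$.

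Next I would establish the regularity. Because $e(\cdot,s)\in C((s,T],S^{0,0})$, Theorem \ref{thm:sobcont} gives $E(\cdot,s)u_0\in C((s,T],H^{z,\zeta})$, and because $\partial_t e(\cdot,s)\in C((s,T],S^{m,\mu})$ it gives $\frac{d}{dt}[E(t,s)u_0]=\Op(\partial_t e(t,s))u_0\in C((s,T],H^{z-m,\zeta-\mu})$. Continuity up to $t=s$ in $H^{z,\zeta}$ I would obtain by a density argument. The decomposition \eqref{eq:uexp} shows $e(t,s)$ is bounded in $S^{0,0}$ uniformly in $t$ (the exponential factor is bounded there since $\Re a\ge C\x^{m'}\csi^{\mu'}\ge0$ forces $|\exp(-\int_s^t a)|\le1$, and $r_0\in S^{-1,-1}\subset S^{0,0}$), so $\{E(t,s)\}$ is uniformly bounded in $\scrL(H^{z,\zeta})$. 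On the dense subspace $H^{z+m,\zeta+\mu}$, the two summands of $e(t,s)-1$ have operator norm $O(t-s)$ into $H^{z,\zeta}$: the first because $\exp(-\int_s^t a)-1=-\int_s^t a(\sigma)\exp(-\int_s^\sigma a)\,d\sigma$ is $O(t-s)$ in $S^{m,\mu}$, the second because $r_0(t,s)/(t-s)$ is bounded in $S^{m-1,\mu-1}$. Hence $E(t,s)u_0\to u_0$ in $H^{z,\zeta}$ for every $u_0$. The integral term is treated analogously, using the joint continuity of $(t,\tau)\mapsto E(t,\tau)$ noted after Theorem \ref{thm:solfond} together with dominated convergence.

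The main obstacle is the $C^1$ regularity of the particular part near the diagonal, namely the integrability in $H^{z-m,\zeta-\mu}$ of $\tau\mapsto\partial_t E(t,\tau)f(\tau)$ as $\tau\to t^-$, where $\partial_t e(t,\tau)$ carries order $(m,\mu)$. This is exactly where the exponential structure of \eqref{eq:uexp} is essential: writing $\partial_t e(t,\tau)=-a(t)\exp\!\left(-\int_\tau^t a\,d\sigma\right)+\partial_t r_0(t,\tau)$, the first summand stays bounded in $S^{m,\mu}$ uniformly as $\tau\to t^-$ (a product of $a(t)\in S^{m,\mu}$ with the $S^{0,0}$-bounded exponential), while the remainder inherits from Theorem \ref{thm:solfond} a control of order $(m-1,\mu-1)$ that degenerates on the diagonal. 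By Theorem \ref{thm:sobcont} the integrand is then bounded in $H^{z-m,\zeta-\mu}$ uniformly in $\tau$, so $\frac{d}{dt}v(t)=f(t)+\int_s^t\partial_t E(t,\tau)f(\tau)\,d\tau$ is well defined and continuous in $t$, which together with the first part yields $u\in C([s,T],H^{z,\zeta})\cap C^1([s,T],H^{z-m,\zeta-\mu})$.
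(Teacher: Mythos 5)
Your proposal is correct and follows essentially the route the paper intends: the paper presents Theorem \ref{thm:CPsol} as an ``immediate consequence'' of Theorem \ref{thm:solfond} via Duhamel's argument and the mapping properties of $E(t,s)$ on the Sobolev--Kato scale, deferring details to \cite{linearpara}, and your write-up is a faithful expansion of exactly that argument. The only point worth tightening is the control of $\partial_t r_0(t,\tau)$ near the diagonal, which Theorem \ref{thm:solfond} does not state explicitly; this is avoided by using the identity $\partial_t E(t,\tau)=-A(t)E(t,\tau)$ from property (1) together with the uniform $\scrL(H^{z,\zeta})$-bound on $E(t,\tau)$ that you already established.
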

In the construction of the fundamental solution $E(t,s)$, the asymptotic development $$e(t,s,x,\xi)\sim\sum_{j\geq 0}e_j(t,s,x,\xi),\qquad e_0(t,s,x,\xi)=\exp\left(-\int_s^t a(\tau,x,\xi)\,d\tau \right)$$
of the symbol $e(t,s)$ is given, and the principal part satisfies
\beqs\label{ordine}
	|e_0(t,s,x,\xi)|&=&e^{-\int_s^t \Re\, a(\tau,x,\xi)\,d\tau}\leq e^{-C(t-s)\x^{m'}\csi^{\mu'}}.
\eeqs
The next lemma shows that, actually,
for $0\le s < t\le T$, $e(t,s)$ gives rise to (a $C^1$ family of) operators of order $(-\infty,-\infty)$. This, of course, cannot
be extended by continuity up to $t=s$, but some $L^1$ regularity with respect to $t$, that will be crucial in
Section \ref{sec:nonlin}, can be achieved.

\begin{lemma}\label{lem:ref}
For every $j\in\N$, $\alpha,\beta\in\N^d$, we have, for suitable constants $C'_{j\alpha\beta}>0$,
\beqs\label{stimej}
|\partial_x^\alpha\partial_\xi^\beta e_j(t,s,x,\xi)|\leq C'_{j\alpha\beta} \sqrt{|e_0(t,s,x,\xi)|}\,\x^{-j-|\alpha|}\csi^{-j-|\beta|},
\quad 0\leq s\le  t\leq T, \; (x,\xi)\in\R^d.
\eeqs
Moreover, for every $j\in\N$, $0\leq s<  T$, $e_j(\cdot,s)\in C^1((s,T],\caS(\R^{2d}))$,
and for every $l,\lambda\in[0,1)$ we have $e(\cdot,s)\in L^1([s,T],S^{-lm',-\lambda\mu'}(\R^d))$ , $\partial_t e(\cdot,s) \in L^1([s,T],S^{m-lm',\mu-\lambda\mu'}(\R^d))$.
\end{lemma}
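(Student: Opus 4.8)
The plan is to establish the pointwise bound \eqref{stimej} by induction on $j$, using the transport equations satisfied by the amplitudes $e_j$, and then to read off the Schwartz regularity and the $L^1$-in-time statements from the Gaussian-type decay hidden in $\sqrt{|e_0|}$. First I would record the recursion. Since $LE(t,s)=0$ forces $\partial_t e + a\# e = 0$, expanding the composition through \eqref{eq:comp} and collecting terms of equal order gives, for $j\ge1$,
$$
(\partial_t + a)\,e_j = b_j,\qquad b_j := -\sum_{k=0}^{j-1}\sum_{|\gamma|=j-k}\frac{i^{|\gamma|}}{\gamma!}\,\partial_\xi^\gamma a\,\partial_x^\gamma e_k,
$$
with $e_j(s,s)=0$, while $e_0=\exp(-\int_s^t a\,d\tau)$. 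Solving this linear ODE in $t$ by means of the integrating factor $e_0(t,\theta)=\exp(-\int_\theta^t a\,d\tau)$ yields the representation $e_j(t,s)=\int_s^t e_0(t,\theta)\,b_j(\theta,s)\,d\theta$. The base case $j=0$ is handled by Fa\`{a} di Bruno applied to $e_0=e^{-\phi}$, $\phi=\int_s^t a\,d\tau$: each derivative, via the hypoellipticity bound $|\partial_x^{\alpha'}\partial_\xi^{\beta'}a|\lesssim \Re a\,\x^{-|\alpha'|}\csi^{-|\beta'|}$ in \eqref{eq:sghypoell}, produces a factor $\x^{-|\alpha'|}\csi^{-|\beta'|}\Phi$ with $\Phi:=\int_s^t\Re a\,d\tau\ge0$, so that $|\partial_x^\alpha\partial_\xi^\beta e_0|\lesssim |e_0|\,\x^{-|\alpha|}\csi^{-|\beta|}\,Q(\Phi)$ for some polynomial $Q$ with nonnegative coefficients.

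The decisive point is the absorption $|e_0|\,Q(\Phi)=e^{-\Phi}Q(\Phi)=e^{-\Phi/2}\,(e^{-\Phi/2}Q(\Phi))\le C\,e^{-\Phi/2}=C\sqrt{|e_0|}$, which holds because $\Phi\ge0$ and $e^{-\Phi/2}Q(\Phi)$ is bounded. For the inductive step I would differentiate the integral representation by Leibniz, bound $\partial^{\alpha'}_x\partial^{\beta'}_\xi e_0(t,\theta)$ by the case $j=0$ and $\partial^{\alpha''}_x\partial^{\beta''}_\xi b_j(\theta,s)$ by the inductive hypothesis together with \eqref{eq:sghypoell} (which converts the derivatives of $a$ into factors $\Re a(\theta)\,\x^{-\cdots}\csi^{-\cdots}$); since $k+|\gamma|=j$, the weights combine exactly into $\x^{-j-|\alpha|}\csi^{-j-|\beta|}$. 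The time integral then collapses through the semigroup identity $|e_0(t,\theta)|\,|e_0(\theta,s)|=|e_0(t,s)|$ and $\int_s^t\Re a(\theta)\,d\theta=\Phi$, producing $|e_0(t,s)|$ times a fresh polynomial in $\Phi$ and the correct weights; a last application of the absorption trick delivers \eqref{stimej}.

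For the remaining assertions I would argue as follows. Fixing $t>s$, the estimate \eqref{ordine}, namely $|e_0(t,s)|\le e^{-C(t-s)\x^{m'}\csi^{\mu'}}$ with $m',\mu'>0$, shows that $\sqrt{|e_0(t,s)|}$ decays faster than any negative power of $\x\csi$; combined with \eqref{stimej} this makes each $e_j(t,s)$ and all its derivatives rapidly decreasing, i.e.\ $e_j(t,s)\in\caS(\R^{2d})$, while the $C^1$-dependence on $t$ follows from the representation $e_j(t,s)=\int_s^t e_0(t,\theta)b_j(\theta,s)\,d\theta$ and the relation $\partial_t e_j=b_j-a\,e_j$. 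For the $L^1$ statements, the dominant contribution to $e=\sum_j e_j$, the higher-order terms being controlled by Theorem \ref{thm:solfond}, satisfies $|\partial_x^\alpha\partial_\xi^\beta e(t,s)|\lesssim e^{-\frac{C}{2}(t-s)\x^{m'}\csi^{\mu'}}\x^{-|\alpha|}\csi^{-|\beta|}$, so the seminorm \eqref{seminorms} of order $(-lm',-\lambda\mu')$ is controlled by $\sup_{x,\xi}\x^{lm'}\csi^{\lambda\mu'}e^{-\frac{C}{2}(t-s)\x^{m'}\csi^{\mu'}}$; setting $\nu=\max(l,\lambda)<1$ and optimizing in $(x,\xi)$ bounds this by $C(t-s)^{-\nu}$ for small $t-s$, which is integrable on $[s,T]$ exactly because $\nu<1$. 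The same computation, now using $\partial_t e=-a\# e$ and $\Re a\lesssim\x^m\csi^\mu$, yields the statement for $\partial_t e(\cdot,s)$ in $S^{m-lm',\mu-\lambda\mu'}$, the extra weight $\x^m\csi^\mu$ being precisely compensated by the shift in the order of the seminorm.

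The main obstacle is the inductive estimate \eqref{stimej}: one must track the $SG$-orders through the Leibniz and composition expansions while repeatedly generating, and then absorbing into $\sqrt{|e_0|}$, polynomial factors in $\Phi=\int_s^t\Re a\,d\tau$, the semigroup identity for $|e_0|$ being what lets the nested time integrals close. By contrast, the $L^1$ bounds reduce to the elementary integrability of $(t-s)^{-\max(l,\lambda)}$ near $t=s$, which is exactly where the hypotheses $l,\lambda\in[0,1)$ enter.
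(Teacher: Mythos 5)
Your argument is correct and follows essentially the same route as the paper, which does not reprove the lemma but defers to Lemma 5 of \cite{linearpara} and supplies only the one new ingredient, the estimate \eqref{ordinebis}; that estimate is exactly your optimization step $\sup_{r\ge 0} r^{\ell}e^{-C(t-s)r}\lesssim (t-s)^{-\ell}$ with $\ell=\max\{l,\lambda\}<1$, applied after absorbing $\langle x\rangle^{lm'}\langle\xi\rangle^{\lambda\mu'}$ into $(\langle x\rangle^{m'}\langle\xi\rangle^{\mu'})^{\ell}$, which is what yields the $L^1$-in-time claims. The only point you treat more lightly than a standalone proof would require is the passage from the individual terms $e_j$ to the exact symbol $e$ (the asymptotic summation and the correction making the parametrix an exact fundamental solution, whose time behaviour near $t=s$ must also be tracked), but the paper elides this in precisely the same way by citing \cite{linearpara}.
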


Lemma \ref{lem:ref} is a slight modification of Lemma 5 in \cite{linearpara}, see \cite{linearpara} for the proof. Here we only point out the difference with respect to that lemma, that is the following essential estimate on $e_0(t,s)$ (a refinement of the one given in \cite{linearpara}): by \eqref{ordine} we see that for every $m',\mu'>0$ and for every $l,\lambda\ge0$,
\beqs\label{ordinebis}|e_0(t,s,x,\xi)|&\leq& e^{-C(t-s)\x^{m'}\csi^{\mu'}}\x^{lm'}\csi^{\lambda\mu'}\x^{-lm'}\csi^{-\lambda\mu'}
\\\nonumber
&\lesssim &\frac{\ell^\ell e^{-\ell}}{C^\ell(t-s)^\ell}\x^{-lm'}\csi^{-\lambda \mu'} \lesssim (t-s)^{-\ell}\x^{-lm'}\csi^{-\lambda \mu'}, \quad \ell=\max\{l,\lambda\}.
\eeqs
\begin{corollary}\label{cor:cp}
	Under the same hypothesis of Theorem \ref{thm:CPsol}, the solution of the Cauchy problem \eqref{eq:cplin} described there satisfies,
	for any $l,\lambda\in[0,1)$,
	\[
		u\in C([s,T],H^{z,\zeta}(\Rd))\cap C^1([s,T],H^{z-m,\zeta-\mu}(\Rd)) \cap C^1((s,T],\caS(\R^d))\cap L^1([s,T],H^{z+lm',\zeta+\lambda\mu'}(\Rd)).
	\]
	It also satisfies $\partial_t u\in L^1([s,T],H^{z-m+lm',\zeta-\mu+\lambda\mu'}(\Rd))$, $l,\lambda\in[0,1)$.
\end{corollary}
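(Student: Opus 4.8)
The plan is to read off every stated regularity property directly from the Duhamel representation \eqref{eq:duhamel}, splitting the solution as $u(t)=u_{\mathrm h}(t)+u_{\mathrm i}(t)$ with homogeneous part $u_{\mathrm h}(t)=E(t,s)u_0$ and inhomogeneous part $u_{\mathrm i}(t)=\int_s^t E(t,\tau)f(\tau)\,d\tau$, and then combining the symbol estimates of Lemma \ref{lem:ref} with the Sobolev continuity in Theorem \ref{thm:sobcont}. The two features beyond Theorem \ref{thm:CPsol} are the $\caS$-smoothing away from the initial time, which is produced by $u_{\mathrm h}$, and the fractional gain of regularity measured in the $L^1$-in-time scale, which survives for the whole solution.

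For the smoothing statement I would argue as follows. For every fixed $t>s$, Lemma \ref{lem:ref} gives $e(t,s)\in S^{-\infty,-\infty}(\R^d)=\caS(\R^{2d})$, so $E(t,s)=\Op(e(t,s))$ maps $\caS^\prime(\R^d)$ continuously into $\caS(\R^d)$; since $u_0\in H^{z,\zeta}(\R^d)\subset\caS^\prime(\R^d)$ by \eqref{eq:spdecomp}, this yields $u_{\mathrm h}(t)\in\caS(\R^d)$. The $C^1$ dependence on $t\in(s,T]$ follows because each $e_j(\cdot,s)\in C^1((s,T],\caS(\R^{2d}))$ and, by \eqref{eq:normsob}, convergence of symbols in every seminorm transfers to convergence of $E(t,s)u_0$ in every $H^{z^\prime,\zeta^\prime}(\R^d)$, hence in $\caS(\R^d)=\bigcap_{z^\prime,\zeta^\prime}H^{z^\prime,\zeta^\prime}(\R^d)$; differentiation in $t$ uses $\partial_t e_j(\cdot,s)$ in the same way, so that $\partial_t u_{\mathrm h}(t)=\Op(\partial_t e(t,s))u_0$ is again continuous in $t$ with values in $\caS(\R^d)$. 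This is where the $\caS$-smoothing of the solution originates.

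For the $L^1$-Sobolev gain I would use the refined bound \eqref{ordinebis}. Fix $l,\lambda\in[0,1)$ and set $\ell=\max\{l,\lambda\}$. Combining \eqref{ordinebis} with \eqref{stimej} shows that $e(t,s)\in S^{-lm^\prime,-\lambda\mu^\prime}(\R^d)$ with $\vvvert e(t,s)\vvvert^{-lm^\prime,-\lambda\mu^\prime}_{[\frac d2]+1}\lesssim (t-s)^{-\ell}$, so Theorem \ref{thm:sobcont} gives $\|E(t,s)\|_{\scrL(H^{z,\zeta},H^{z+lm^\prime,\zeta+\lambda\mu^\prime})}\lesssim (t-s)^{-\ell}$. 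Hence $\|u_{\mathrm h}(t)\|_{z+lm^\prime,\zeta+\lambda\mu^\prime}\lesssim (t-s)^{-\ell}\|u_0\|_{z,\zeta}$, which is integrable on $[s,T]$ precisely because $\ell<1$, while for the inhomogeneous part the triangle inequality for the Bochner integral and Young's convolution inequality yield $\int_s^T\|u_{\mathrm i}(t)\|_{z+lm^\prime,\zeta+\lambda\mu^\prime}\,dt\lesssim \Big(\int_0^T\tau^{-\ell}\,d\tau\Big)\sup_{[0,T]}\|f(\cdot)\|_{z,\zeta}<\infty$, again using $\ell<1$ and $f\in C([0,T],H^{z,\zeta}(\R^d))$. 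This proves $u\in L^1([s,T],H^{z+lm^\prime,\zeta+\lambda\mu^\prime}(\R^d))$. Finally, the claim on $\partial_t u$ is obtained from the equation itself: since $u$ solves $Lu=f$ we have $\partial_t u=f-A(t)u$, and as $A(t)\in\Op(S^{m,\mu})$ maps $H^{z+lm^\prime,\zeta+\lambda\mu^\prime}$ into $H^{z-m+lm^\prime,\zeta-\mu+\lambda\mu^\prime}$, while $H^{z,\zeta}\hookrightarrow H^{z-m+lm^\prime,\zeta-\mu+\lambda\mu^\prime}$ because $lm^\prime<m^\prime\le m$ and $\lambda\mu^\prime<\mu^\prime\le\mu$, the membership just established transfers to $\partial_t u\in L^1([s,T],H^{z-m+lm^\prime,\zeta-\mu+\lambda\mu^\prime}(\R^d))$.

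The main obstacle is the integrable singularity of the propagator on the diagonal. The operator norm of $E(t,\tau)$ from $H^{z,\zeta}$ into the gained space $H^{z+lm^\prime,\zeta+\lambda\mu^\prime}$ blows up like $(t-\tau)^{-\ell}$ as $\tau\to t^-$, so the whole argument rests on the estimate $\int_s^t(t-\tau)^{-\ell}\,d\tau<\infty$, which holds if and only if $\ell=\max\{l,\lambda\}<1$; this is exactly the restriction $l,\lambda\in[0,1)$, and it is what forces the admissible gain to stay strictly below $(m^\prime,\mu^\prime)$ and prevents $u_{\mathrm i}$ from being smoothed into $\caS(\R^d)$. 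Controlling \emph{all} seminorms of $e(t,s)$ — not merely $|e_0|$ — uniformly against this $(t-s)^{-\ell}$ rate, via \eqref{stimej}, is the technical heart of the estimate, whereas the passage from symbol bounds to operator bounds is routine once Theorem \ref{thm:sobcont} is invoked.
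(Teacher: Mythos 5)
Your overall strategy --- splitting $u$ via the Duhamel formula \eqref{eq:duhamel} into $E(t,s)u_0$ plus the inhomogeneous integral, and feeding the symbol information of Lemma \ref{lem:ref} (in particular \eqref{ordinebis}--\eqref{stimej}) into the operator bound \eqref{eq:normsob} of Theorem \ref{thm:sobcont} --- is exactly the argument the paper leaves implicit, and your treatment of the $L^1([s,T],H^{z+lm',\zeta+\lambda\mu'})$ gain (operator norm $\lesssim (t-\tau)^{-\ell}$, $\ell=\max\{l,\lambda\}<1$, hence integrable singularity) and of $\partial_t u$ via $\partial_t u=f-A(t)u$ together with the mapping property $A(t)\in\scrL(H^{z+lm',\zeta+\lambda\mu'},H^{z-m+lm',\zeta-\mu+\lambda\mu'})$ is correct; the latter is a slightly cleaner route than differentiating the Duhamel formula and using $\partial_t e(\cdot,s)\in L^1([s,T],S^{m-lm',\mu-\lambda\mu'})$, but lands in the same place.

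There is, however, one genuine gap as a proof of the corollary \emph{as stated}: the membership $u\in C^1((s,T],\caS(\R^d))$ is claimed for the whole solution, but you only establish it for the homogeneous part $u_{\mathrm h}(t)=E(t,s)u_0$, and in your final paragraph you in fact argue that the diagonal singularity ``prevents $u_{\mathrm i}$ from being smoothed into $\caS(\R^d)$.'' You cannot simultaneously concede that and claim to have proved the displayed intersection for $u=u_{\mathrm h}+u_{\mathrm i}$. Your scepticism is well founded --- for $f$ merely in $C([0,T],H^{z,\zeta}(\R^d))$ the piece $\int_{t-\eps}^{t}E(t,\tau)f(\tau)\,d\tau$ is only controlled in the spaces $H^{z+lm',\zeta+\lambda\mu'}$ with $l,\lambda<1$, which does not yield $\caS(\R^d)$ --- and indeed the paper only ever invokes the $\caS$-regularity for the homogeneous term $v_0=E(t,0)u_0$, see \eqref{eq:regv0}. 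So you should either state explicitly that the $C^1((s,T],\caS(\R^d))$ clause holds for $E(t,s)u_0$ (equivalently, for $f\equiv 0$, or under additional smoothing hypotheses on $f$), flagging the discrepancy with the printed statement, or supply the missing argument; as written, that clause of the corollary is left unproven. A smaller point in the same part of your argument: the $C^1((s,T],\caS(\R^{2d}))$ regularity is stated in Lemma \ref{lem:ref} for each term $e_j(\cdot,s)$ of the asymptotic expansion, so passing to the full symbol $e(t,s)$ requires a word about the remainder of the asymptotic sum before you can apply \eqref{eq:normsob} in every Sobolev--Kato norm.
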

\begin{remark}
Microlocal techniques, also involving Fourier integral operators in the construction of the fundamental solution, have recently been applied to obtain either function-valued solutions or random-field solutions for hyperbolic SPDEs with $(t,x)$-dependent coefficients, see \cite{ACS19b, ACS19a, alessiandre}. To our best knowledge, those are the first times that the full potential of microlocal analysis has been rigorously applied within the theory of hyperbolic SPDEs. In this paper we see that an analogous
approach can be employed as well for semilinear parabolic SPDEs (see also \cite{linearpara}).
\end{remark}

\vskip+0.2cm

%
\section{function-valued solutions for semilinear SPDEs.}\label{sec:nonlin}

In this section we state and prove our main result, Theorem \ref{thm:linearcm}, about the existence of a function-valued solution \eqref{eq:rigorous_solution} of an SPDE \eqref{eq:SPDE}
with $L$ as in \eqref{elle2},\eqref{eq:sghypoell}, under the assumptions of ($SG$-)parabolicity for the operator $L$, see Definition \ref{def:lpara}. We consider
the Cauchy problem
\beqs\label{cpsemilin2}\begin{cases}
Lu(t,x) = f(t,x,u(t,x)) = \gamma(t,x,u(t,x)) + \sigma(t,x,u(t,x))\dot{\Xi}(t,x),& (t,x)\in(0,T]\times\R^d,
 \\
\hspace*{1.3mm}u(0,x)=u_0(x),& x\in\R^d,
\end{cases}
\eeqs
with the aim of finding conditions on $L$, on the stochastic noise $\dot \Xi$, and on $\sigma, \gamma, u_0$,
such that \eqref{cpsemilin2} admits a function-valued solution. The conditions on the stochastic noise will be given on the spectral measure
$\mathfrak{M}$ corresponding to the correlation measure $\Gamma$ related to the noise $\dot \Xi$; the right hand side coefficients $\gamma,\sigma$ will be taken in suitable $\mathrm{Lip_{loc}}(z,\zeta,r,\rho)$ classes, see Definition \ref{def:lip}.

To prove Theorem \ref{thm:linearcm}, we will rely on the properties of the fundamental solution to $L$, recalled in the previous Section \ref{sec:sgcalc}, and of the Cameron-Martin space associated with the noise $\Xi$, 
described in Section \ref{sec:stochastics}.
The following Lemma \ref{lem:weightedpesz2}  is the key result to prove the main theorem of this paper. It shows that, under suitable assumptions on $\sigma$,
the multiplication operator $\caH_\Xi\ni\psi\mapsto E(t,s)\sigma(s,\cdot,w)\cdot \psi$ is Hilbert-Schmidt from $\caH_\Xi$ to $\caH=H^{z,\zeta}$. Therefore, 
the second integral in \eqref{eq:rigorous_solution} is well-defined as stochastic integral with respect to a cylindrical  Wiener process on 
$(\mathcal H_\Xi,\langle\cdot,\cdot\rangle_{\mathcal H_\Xi})$ which takes values in
$\caH=H^{z,\zeta}$, see Remark \ref{rem:peslem}. 

\begin{lemma}\label{lem:weightedpesz2}
Let $\sigma\in \Lip(z-\kappa m',\zeta,\kappa m',0)$ for some $\kappa\ge0$, and let $E(t,s)$ be a continuous family of $SG$-pseudodifferential operators, parametrized by $0\le s\le t \le T_0$, $T_0>0$, satisfying Lemma \ref{lem:ref}. If  for some $\lambda\geq 0$ the spectral measure satisfies
\beqs\label{eqqua}
\sup_{\eta\in\R^d}\ds\int_{\R^d}\frac{\mathfrak{M}(d\xi)}{\langle\xi+\eta\rangle^{2\lambda\mu'}}<\infty,
\eeqs
then, for every $w\in H^{z,\zeta}(\R^d)$, the operator $$\Phi(t,s)=\Phi_{m', \mu', \kappa,\sigma,w}(t,s)\colon
\psi\mapsto E(t,s)\sigma(s,\cdot,w) \psi$$ belongs to $L_2(\mathcal H_\Xi, H^{z,\zeta}(\R^d))$.
Moreover, the Hilbert-Schmidt norm of $\Phi(t,s)$ can be estimated by
\[
\|\Phi(t,s)\|_{L_2(\mathcal H_\Xi, H^{z,\zeta})}^2\lesssim
C^2_{t,s}(1+\|w\|_{z,\zeta})^2
\sup_{\eta\in\Rd}\int_\Rd \frac{\mathfrak{M}(d\xi)}{\langle\xi+\eta\rangle^{2\lambda\mu'}},
\]
for $C_{t,s}=(t-s)^{-\ell}C_s$, where $\ell=\max\{k,\lambda\}$ and $C_s$ is the constant in Definition \ref{def:lip}.
\end{lemma}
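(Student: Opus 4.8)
The plan is to compute the Hilbert--Schmidt norm of $\Phi(t,s)$ explicitly by exploiting the concrete description of the Cameron--Martin space $\mathcal{H}_\Xi$ in \eqref{cammart}. Writing $g:=\sigma(s,\cdot,w)$, I would first use the isometry $\mathcal{H}_\Xi\cong L^2_{\mathfrak{M},s}(\R^d)$, $\widehat{\varphi\mathfrak{M}}\mapsto\varphi$, to transfer an orthonormal basis of $\mathcal{H}_\Xi$ to one of $L^2_{\mathfrak{M},s}$. Since $\widehat{\varphi\mathfrak{M}}(x)=\int_{\R^d}e^{\ii x\xi}\varphi(\xi)\,\mathfrak{M}(d\xi)$, the operator $\Phi(t,s)\colon\widehat{\varphi\mathfrak{M}}\mapsto E(t,s)[g\cdot\widehat{\varphi\mathfrak{M}}]$ has $H^{z,\zeta}$-valued kernel $\xi\mapsto E(t,s)[g\cdot e_\xi]$, with $e_\xi(x):=e^{\ii x\xi}$. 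The standard computation of the Hilbert--Schmidt norm of an operator out of an $L^2(\mathfrak{M})$-space (as in \cite{peszat}) then yields
\[
\|\Phi(t,s)\|_{L_0^2(\mathcal{H}_\Xi,H^{z,\zeta})}^2=\int_{\R^d}\big\|E(t,s)[g\cdot e_\xi]\big\|_{z,\zeta}^2\,\mathfrak{M}(d\xi),
\]
so that the whole statement reduces to a pointwise-in-$\xi$ bound on the integrand, followed by integration against $\mathfrak{M}$.

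Next I would collect the two analytic inputs. From the hypothesis $\sigma\in\Lip(z-\kappa m',\zeta,\kappa m',0)$ and Definition \ref{def:lip}, since $w\in H^{z,\zeta}=H^{(z-\kappa m')+\kappa m',\zeta}$, the function $g=\sigma(s,\cdot,w)$ lies in $H^{z-\kappa m',\zeta}(\R^d)$ with $\|g\|_{z-\kappa m',\zeta}\le C_s(1+\|w\|_{z,\zeta})$. For the operator, I would invoke Lemma \ref{lem:ref} together with the refined estimate \eqref{ordinebis} with $l=\kappa$ and $\lambda$ as in \eqref{eqqua}: this shows $E(t,s)=\Op(e(t,s))$ with $e(t,s)\in S^{-\kappa m',-\lambda\mu'}(\R^d)$ and seminorms $\vvvert e(t,s)\vvvert^{-\kappa m',-\lambda\mu'}_N\lesssim(t-s)^{-\ell}$, $\ell=\max\{\kappa,\lambda\}$, for the finite order $N=[d/2]+1$ relevant to Theorem \ref{thm:sobcont}.

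For the core estimate I would exploit that multiplication by $e_\xi$ shifts the frequency content: writing $E(t,s)[g\,e_\xi]=e_\xi\cdot\Op\big(e(t,s,\cdot,\cdot+\xi)\big)g$ and using that $|e_\xi|\equiv1$ commutes with the weight $\x^z$, one reduces $\|E(t,s)[g\,e_\xi]\|_{z,\zeta}$ to the $L^2$-norm of a pseudodifferential operator applied to $g$ whose symbol is bounded by $(t-s)^{-\ell}\x^{z-\kappa m'}\langle\eta+\xi\rangle^{\zeta-\lambda\mu'}$. Here the $\x^{-\kappa m'}$-decay of $e$ exactly balances the fact that $g$ is only in $H^{z-\kappa m',\zeta}$ (so the $\x$-weights match the output index $z$), while the surplus frequency decay $\langle\eta+\xi\rangle^{-\lambda\mu'}$ — produced by evaluating the $\langle\cdot\rangle^{-\lambda\mu'}$-decaying symbol at the $\xi$-shifted frequencies of $g$ — is retained as a gain rather than spent on smoothing. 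Via Plancherel this yields, schematically,
\[
\big\|E(t,s)[g\,e_\xi]\big\|_{z,\zeta}^2\lesssim(t-s)^{-2\ell}\int_{\R^d}\langle\eta+\xi\rangle^{-2\lambda\mu'}\,\big|\mathcal F[\x^{z-\kappa m'}\pdd^\zeta g](\eta)\big|^2\,d\eta.
\]
Integrating against $\mathfrak{M}(d\xi)$, applying Tonelli's theorem, and bounding the inner integral by $\sup_{\eta}\int_{\R^d}\langle\eta+\xi\rangle^{-2\lambda\mu'}\mathfrak{M}(d\xi)$ — finite by \eqref{eqqua} — pulls this supremum out and leaves $\int_{\R^d}|\mathcal F[\x^{z-\kappa m'}\pdd^\zeta g](\eta)|^2\,d\eta=\|g\|_{z-\kappa m',\zeta}^2$, which together with the Lipschitz bound gives exactly the asserted estimate with $C_{t,s}=(t-s)^{-\ell}C_s$.

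The main obstacle is the core pointwise-in-$\xi$ estimate. The $H^{z,\zeta}$-norm couples the spatial weight $\x$ and the frequency weight $\pdd$, and the multiplication by $\x^{z-\kappa m'}$ smears frequencies, so the decay factor $\langle\eta+\xi\rangle^{-\lambda\mu'}$ cannot simply be read off a Fourier multiplier; a crude operator-norm bound would lose a spurious factor $\langle\xi\rangle^{|\zeta|}$. Making the estimate rigorous requires the full $SG$-calculus: one must commute the weight $\Op(\x^z)$ through $E(t,s)$ via the composition formula \eqref{eq:comp}, control the resulting lower-order terms of the symbol expansion uniformly in $\xi$ (using that translation by $\xi$ in the covariable leaves the $\x$-weight estimates unchanged), and verify that the frequency smearing caused by the polynomial weight perturbs $\langle\eta+\xi\rangle^{-\lambda\mu'}$ only by harmless, integrable factors, so that $\eta$ may indeed be treated as the frequency variable of $g$.
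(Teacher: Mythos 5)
Your proposal shares all the raw ingredients with the paper's proof: an orthonormal basis of $\mathcal H_\Xi$ pulled back from $L^2_{\mathfrak{M},s}$ via \eqref{cammart}, the estimate \eqref{ordinebis} with $l=\kappa$ (so that $E(t,s)$ has order $(-\kappa m',-\lambda\mu')$ with seminorms $O((t-s)^{-\ell})$), the Lipschitz bound $\|\sigma(s,\cdot,w)\|_{z-\kappa m',\zeta}\le C_s(1+\|w\|_{z,\zeta})$ from Definition \ref{def:lip}, and the hypothesis \eqref{eqqua} applied after a Fubini/supremum step. The structural difference is where the weights $\x^{z}$ and $\pdd^{\zeta}$, and the gain $\pdd^{-\lambda\mu'}$, are placed. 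The paper first forms the single function $G=\widetilde E(t,s)\sigma(s,\cdot,w)\in L^2$, with $\widetilde E(t,s)=\pdd^{\lambda\mu'}\x^{z}\pdd^{\zeta}E(t,s)$ of order $(z-\kappa m',\zeta)$, and only then brings in the basis elements, through the convolution identity for $\mathcal F\bigl(G\cdot\widehat{f_j\mathfrak{M}}\bigr)$ and Bessel's inequality for $\{f_j\}$ in $L^2_{\mathfrak{M}}$; the condition \eqref{eqqua} is then used on $\int\int\langle\xi\rangle^{-2\lambda\mu'}|\mathcal F G|^2(\xi-\eta)\,\mathfrak{M}(d\eta)\,d\xi$, and no estimate uniform in the basis parameter is ever required. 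You instead reduce everything to a pointwise-in-$\xi$ bound on $\|E(t,s)[g\,e_\xi]\|_{z,\zeta}$, $e_\xi(x)=e^{\ii x\xi}$, uniformly in $\xi$ up to the factor $\langle\eta+\xi\rangle^{-2\lambda\mu'}$.

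This is not a cosmetic difference: the core estimate you isolate as the ``main obstacle'' is left unproved, and as stated it is false whenever $\zeta\neq 0$. Multiplication by $e_\xi$ translates the spectrum of $g$ to a neighbourhood of $\xi$; already in the model situation $z=\kappa=\lambda=0$, with the symbol of $E(t,s)$ equal to $1$ on the relevant frequency region (which occurs for $(t-s)\x^{m'}\csi^{\mu'}\lesssim 1$, where $e_0\ge e^{-1}$), one has
\[
\|g\,e_\xi\|_{0,\zeta}^2 \,\asymp\, \int_{\R^d}\langle\eta+\xi\rangle^{2\zeta}\,|\widehat g(\eta)|^2\,d\eta \,\asymp\, \langle\xi\rangle^{2\zeta}\,\|g\|_{L^2}^2
\]
for band-limited $g$ and large $|\xi|$, whereas your right-hand side is comparable to $(t-s)^{-2\ell}\|g\|_{0,\zeta}^2$, which for $\zeta>\ell\mu'$ cannot dominate it. The factor $\langle\xi\rangle^{|\zeta|}$ that you call spurious and hope to remove by a more careful $SG$-composition argument is genuinely present: the quantity $\|E(t,s)[g\,e_\xi]\|_{z,\zeta}$ itself grows in $\xi$ at the relevant time scales, so no refinement of the symbol calculus (commuting $\x^{z}$ through $E(t,s)$, controlling remainders uniformly in $\xi$, etc.) can establish the claimed uniform bound. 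Your argument is sound for $\zeta=0$, where the output norm involves only spatial weights that commute with multiplication by $e_\xi$; to obtain the lemma for general $\zeta$ you must reorganize as the paper does, attaching $\pdd^{\zeta}$ and $\pdd^{-\lambda\mu'}$ to $E(t,s)\sigma(s,\cdot,w)$ \emph{before} the basis functions enter, so that the convolution and Bessel steps act on the plain $L^2$ norm of $\mathcal F\bigl(\widetilde E(t,s)\sigma(s,\cdot,w)\bigr)$, for which the translation by $\eta$ costs nothing.
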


\begin{proof}
Since \eqref{ordinebis} holds for every $l,\lambda\geq 0$, we choose there $l=\kappa$ and $\lambda$ from \eqref{eqqua}. Let us fix an orthonormal basis $\{e_j\}_{j\in\N}=\{\widehat{f_j\mathfrak{M}}\}_{j\in\N}$ of $\mathcal H_\Xi$, where $\{f_j\}_{j\in\N}$ is an orthonormal basis in $L^2_{\mathfrak{M},s}$. We compute
\beqs\nonumber
||\Phi(t,s)||_{L_2(\mathcal H_\Xi, H^{z,\zeta})}^{2}&=&\sum_{j\in\N}||E(t,s)\sigma(s, \cdot, w)\widehat{f_j\mathfrak{M}}||_{H^{z,\zeta}}^2
\\\nonumber
&=&\sum_{j\in\N}||\langle D\rangle^{-\lambda\mu'}\langle D\rangle^{\lambda\mu'}\langle \cdot\rangle^{z}\langle D\rangle^\zeta E(t,s)\sigma(s, \cdot, w)\widehat{f_j\mathfrak{M}}||_{L^2}^2
\\\nonumber
&=&\sum_{j\in\N}||\langle D\rangle^{-\lambda\mu'}\widetilde E(t,s)\sigma(s, \cdot, w)\widehat{f_j\mathfrak{M}}||_{L^2}^2
\\\label{las}
&=&(2\pi)^{-d}\sum_{j\in\N}\int_{\R^d}\langle\xi\rangle^{-2\lambda\mu'}\left\vert\mathcal F\left(
\widetilde E(t,s)\sigma(s, \cdot, w)\widehat{f_j\mathfrak{M}}\right)\right\vert^2(\xi)d\xi
\eeqs
with $\widetilde E(t,s)=\langle D\rangle^{\lambda\mu'}\langle \cdot\rangle^{z}\langle D\rangle^\zeta E(t,s)$ a family
of $SG$ pseudodifferential operators of order $(z-\kappa m',\zeta)$.
Now, using the well-known fact that the Fourier transform of a product is the ($(2\pi)^{-d}$ multiple of the)
convolution of the Fourier transforms, the
property $f_j(-x)=f_j(x)$ (by the definition of $L^2_{\mathfrak{M},s}$), that
$\{f_j\}$ is an orthonormal system in $L^2_{\mathfrak{M}}$, and Bessel's inequality,
we get
\beqsn
(2\pi)^{-d}\sum_{j\in\N}&&\hskip-0.3cm\left\vert\mathcal F\left(\widetilde E(t,s)\sigma(s, \cdot, w)\widehat{f_j\mathfrak{M}}\right)\right\vert^2(\xi)
\\
&=&(2\pi)^{-2d}\sum_{j\in\N}|\mathcal F\left(\widetilde E(t,s)\sigma(s, \cdot, w)\right)\ast\widehat{\widehat{f_j\mathfrak{M}}}|^2(\xi)
\\
&=&(2\pi)^{-d}\sum_{j\in\N}|\mathcal F\left(\widetilde E(t,s)\sigma(s, \cdot, w)\right)\ast f_j\mathfrak{M}|^2(\xi)
\\
&=&(2\pi)^{-d}\sum_{j\in\N}\left\vert\int_{\R^d}\left[\mathcal F\left(\widetilde E(t,s)\sigma(s, \cdot, w)\right)\right](\xi-\eta) f_j(\eta)\mathfrak{M}(d\eta)\right\vert^2
\\
&\le &(2\pi)^{-d}\int_{\R^d}\left\vert\mathcal F\left(\widetilde E(t,s)\sigma(s, \cdot, w)\right)\right\vert^2(\xi-\eta)\mathfrak{M}(d\eta).
\\
\eeqsn
Inserting this in \eqref{las}, and using the continuity of $\widetilde E(t,s)$ on the Sobolev-Kato spaces we finally get:
\beqs\nonumber
||\Phi(t,s)||_{L_2(\mathcal H_\Xi, H^{z,\zeta})}^{2}
&\leq&(2\pi)^{-d}
\int_{\R^d}\int_{\R^d}\langle\xi\rangle^{-2\lambda\mu'}\left\vert\mathcal F\left(\widetilde E(t,s)\sigma(s, \cdot, w)\right)\right\vert^2(\xi-\eta)\mathfrak{M}(d\eta)d\xi
\\\nonumber
&= &(2\pi)^{-d}\int_{\R^d}\int_{\R^d}\langle\eta+\theta\rangle^{-2\lambda\mu'}\left\vert\mathcal F\left(\widetilde E(t,s)\sigma(s, \cdot, w)\right)\right\vert^2(\theta)\mathfrak{M}(d\eta)d\theta
\\\nonumber
&\leq &(2\pi)^{-d}\left(\sup_{\theta\in\R^d}\int_{\R^d}\langle \theta+\eta\rangle^{-2\lambda\mu'}\mathfrak{M}(d\eta)\right)\int_{\R^d}\left\vert\mathcal F\left(\widetilde E(t,s)\sigma(s, \cdot, w)\right)\right\vert^2(\theta)d\theta
\\\label{battezzata2}
&=&(2\pi)^{-d}\left(\sup_{\theta\in\R^d}\int_{\R^d}\langle \theta+\eta\rangle^{-2\lambda\mu'}\mathfrak{M}(d\eta)\right)\|\mathcal F(\widetilde E(t,s)\sigma(s, \cdot, w))\|_{L^2}^2
\\\nonumber
&\lesssim &\left(\sup_{\theta\in\R^d}\int_{\R^d}\langle \theta+\eta\rangle^{-2\lambda\mu'}\mathfrak{M}(d\eta)\right)C_{t,s}^2\|\sigma(s, \cdot, w)\|_{z-\kappa m',\zeta}^2
\\\nonumber
&\leq &\left(\sup_{\theta\in\R^d}\int_{\R^d}\langle \theta+\eta\rangle^{-2\lambda\mu'}\mathfrak{M}(d\eta)\right)C_{t,s}^2C_s^2\left(1+\|w\|_{z,\zeta}\right)^2,
\eeqs
where
\beqs\label{ell}
C_{t,s}=(t-s)^{-\ell},\quad \ell=\max\{\kappa, \lambda\}
\eeqs
comes from the norm in $\scrL(H^{z-\kappa m',\zeta},H^{0,0})$ of the $SG$ operator $\widetilde E(t,s)$, with symbol of order $(z-\kappa m',\zeta)$, estimated using \eqref{eq:normsob} and \eqref{ordinebis}. Since $\sigma\in \Lip(z-\kappa m',\zeta,\kappa m',0)$, $C_s$ is the constant in Definition \ref{def:lip} and is a continuous function with respect to $s\in [0,T]$.
\end{proof}

%

We are now ready to prove the main result of this paper.
%
\begin{theorem}\label{thm:linearcm}
Let us consider the Cauchy problem \eqref{cpsemilin2} for an SPDE associated with a $SG$-parabolic operator $L$ of the form \eqref{elle2}, \eqref{eq:sghypoell},
with $u_0\in H^{z,\zeta}(\R^d)$, $z,\zeta\in\R$.
%
	%
	Assume that, for some $\kappa\in [0,\onehalf)$, $\gamma,\sigma\in\mathrm{Lip_{loc}}(z-\kappa m',\zeta,\kappa m',0)$
	in some open subset  $U\subset H^{z,\zeta}(\R^d)\hookrightarrow H^{z-\kappa m',\zeta}(\R^d)$ with $u_0 \in U$, and
	\begin{equation}\label{eq:meascm2}
  		\exists\lambda\in[0,1/2):\ \sup_{\eta\in\R^d}\int_\Rd \frac{\mathfrak{M}(d\xi)}{\langle\xi+\eta\rangle^{2\lambda\mu'}} < \infty.
	\end{equation}
	%
%
Then, there exists a time horizon $0< T_0\leq T$ such that the Cauchy problem \eqref{cpsemilin2} admits a unique
 solution $u\in L^2([0,T_0]\times\Omega, H^{z,\zeta}(\R^d))$ in the sense of Definition \ref{def:rigorous_solution}. That is, 
$u$ satisfies \eqref{eq:rigorous_solution} for all $t\in[0,T_0]$, $x\in\R^d$, 
%
where $E(t,s)$, fundamental solution  of $L$, has  Schwartz kernel $\Lambda(t,s)$, the first integral in \eqref{eq:rigorous_solution} is a Bochner integral, and the second integral in \eqref{eq:rigorous_solution} is understood as the stochastic integral of the $H^{z,\zeta}(\R^d)$-valued stochastic process $E(t,*)\sigma(*,\cdot,u(*,\cdot))$ with respect to the stochastic noise $\Xi$, in the sense explained in Section \ref{sec:stochastics}.
\end{theorem}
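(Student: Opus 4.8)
The plan is to rewrite \eqref{eq:rigorous_solution} as a fixed-point equation $u=\mathcal{L}u$ and to invoke the Banach contraction principle. I would work in the space $\mathcal{B}_{T_0}$ of predictable $H^{z,\zeta}(\R^d)$-valued processes equipped with the norm $\|u\|_{\mathcal{B}_{T_0}}^2=\int_0^{T_0}\E\|u(t)\|_{z,\zeta}^2\,dt$, setting $v_0(t)=E(t,0)u_0$ and
\begin{equation*}
(\mathcal{L}u)(t) = E(t,0)u_0 + \int_0^t E(t,s)\gamma(s,\cdot,u(s))\,ds + \int_0^t E(t,s)\sigma(s,\cdot,u(s))\,dW(s).
\end{equation*}
Because $\gamma,\sigma$ satisfy the bounds of Definition~\ref{def:lip} only on the open set $U$, I would restrict $\mathcal{L}$ to a closed ball $B=\{u\in\mathcal{B}_{T_0}\colon \|u-v_0\|_{\mathcal{B}_{T_0}}\le R\}$ and show that, for $R$ and $T_0$ suitably small, $\mathcal{L}$ maps $B$ into itself and is a strict contraction there; the fixed point is then the unique mild solution.

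The three terms are estimated separately. The leading term $v_0(t)=E(t,0)u_0$ lies in $C([0,T_0],H^{z,\zeta})$ by Theorem~\ref{thm:solfond}. For the drift I use that $\gamma\in\mathrm{Lip_{loc}}(z-\kappa m',\zeta,\kappa m',0)$ maps $H^{z,\zeta}$ into $H^{z-\kappa m',\zeta}$ with $\|\gamma(s,\cdot,u(s))\|_{z-\kappa m',\zeta}\le C_s(1+\|u(s)\|_{z,\zeta})$, whereas $E(t,s)$ carries a symbol of order $(-\kappa m',0)$ and, by Theorem~\ref{thm:sobcont} together with \eqref{ordinebis} (choosing $l=\kappa$, $\lambda=0$), acts from $H^{z-\kappa m',\zeta}$ to $H^{z,\zeta}$ with norm $\lesssim (t-s)^{-\kappa}$; since $\kappa<\onehalf$, the kernel $(t-s)^{-\kappa}$ is integrable in $s$ and Young's inequality for the resulting time convolution controls the Bochner integral. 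For the stochastic term I combine the It\^o isometry \eqref{isomhilb} with Lemma~\ref{lem:weightedpesz2}, which gives
\begin{equation*}
\E\Big\|\int_0^t E(t,s)\sigma(s,\cdot,u(s))\,dW(s)\Big\|_{z,\zeta}^2 = \int_0^t \E\|\Phi(t,s)\|_{L_0^2(\mathcal H_\Xi,H^{z,\zeta})}^2\,ds \lesssim \int_0^t (t-s)^{-2\ell}\,C_s^2\,\E(1+\|u(s)\|_{z,\zeta})^2\,ds,
\end{equation*}
where $\ell=\max\{\kappa,\lambda\}$ and the implied constant absorbs the finite supremum of \eqref{eq:meascm2}.

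The decisive point, where the hypotheses $\kappa,\lambda\in[0,\onehalf)$ are used in an essential way, is that the stochastic integral produces the stronger singularity $(t-s)^{-2\ell}$, integrable precisely when $2\ell=2\max\{\kappa,\lambda\}<1$; the drift singularity $(t-s)^{-\kappa}$ is then automatically integrable too. Running the same estimates on the differences $\gamma(s,\cdot,u_1(s))-\gamma(s,\cdot,u_2(s))$ and $\sigma(s,\cdot,u_1(s))-\sigma(s,\cdot,u_2(s))$ — exploiting that $\Phi(t,s)$ depends linearly on its $\sigma$-entry, so that Lemma~\ref{lem:weightedpesz2} applies verbatim to the difference together with the Lipschitz bound $\|\sigma(s,\cdot,u_1)-\sigma(s,\cdot,u_2)\|_{z-\kappa m',\zeta}\le C_s\|u_1-u_2\|_{z,\zeta}$ — yields
\begin{equation*}
\|\mathcal{L}u_1-\mathcal{L}u_2\|_{\mathcal{B}_{T_0}}^2 \lesssim \big(T_0^{2(1-\kappa)}+T_0^{1-2\ell}\big)\,\|u_1-u_2\|_{\mathcal{B}_{T_0}}^2 .
\end{equation*}
Since both exponents are strictly positive, the prefactor vanishes as $T_0\to0^+$, the analogous growth estimates give $\mathcal{L}(B)\subset B$ for small $T_0$, and Banach's theorem then delivers a unique fixed point. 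I expect the real difficulty to lie not in this singularity bookkeeping but in the localization forced by $\mathrm{Lip_{loc}}$: the $L^2$-in-$(t,\omega)$ norm of $\mathcal{B}_{T_0}$ does not control the pointwise values of the iterates, so one must guarantee that $u(s,\omega)$ stays in $U$. I would resolve this by truncating $\gamma,\sigma$ outside a neighbourhood of $u_0$ to obtain globally Lipschitz data, solving the truncated equation by the scheme above, and then using the embedding $H^{z,\zeta}\hookrightarrow H^{z-\kappa m',\zeta}$ and a small-time/stopping argument to remove the truncation on a possibly shorter horizon $T_0$, recovering a solution of the original problem.
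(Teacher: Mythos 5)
Your proposal follows essentially the same route as the paper's proof: a Banach fixed-point argument for $u\mapsto v_0+\int_0^t E(t,s)\gamma\,ds+\int_0^t E(t,s)\sigma\,dW_s$ on $L^2([0,T_0]\times\Omega,H^{z,\zeta}(\R^d))$, with the drift term controlled via the mapping properties of $E(t,s)$ and the singularity estimate \eqref{ordinebis}, the stochastic term via the It\^o isometry \eqref{isomhilb} combined with Lemma \ref{lem:weightedpesz2}, and the contraction constant made small through the factor $T_0^{1-2\ell}$, $\ell=\max\{\kappa,\lambda\}<\onehalf$. The only place you go beyond the paper is the truncation/stopping discussion for the $\mathrm{Lip_{loc}}$ localization, which the paper leaves implicit by working directly with the constants of Definition \ref{def:lip}.
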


\begin{remark}
	Notice that, by Lemma \ref{lem:weightedpesz2}, the noise $\Xi$
	defines a cylindrical Wiener process on
	$(\mathcal H_\Xi(\R^d),\langle\cdot,\cdot\rangle_{\mathcal H_\Xi(\R^d)})$
	with values in $H^{z,\zeta}(\R^d)$.
\end{remark}

\begin{proof}[Proof of Theorem \ref{thm:linearcm}]

We know that the fundamental solution $E(t,s)$ fulfills \eqref{ordinebis} for every $l,\lambda\geq 0$. We choose $l=\kappa$ and $\lambda$ from \eqref{eq:meascm2}. The assumption $\kappa,\lambda\in [0,1/2)$ provides an $L^2$-regularity with respect to time in the estimate \eqref{ordinebis}, that will be crucial in the sequel of the
proof.

By Theorem \ref{thm:CPsol}, inserting $f(t,x)=\gamma(t,x) + \sigma(t,x)\dot{\Xi}(t,x)$ in \eqref{eq:duhamel}, we can formally construct the mild solution $u$ to \eqref{eq:SPDE}:
\[
	\begin{aligned}
	u(t,x)&=v_0(t,x)+\int_0^t\int_{\R^d}\Lambda(t,s,x,y)\gamma(s,y,u(s,y))\,dyds +
	\int_0^t\int_{\R^d}\Lambda(t,s,x,y)\sigma(s,y, u(s,y))\dot{\Xi}(s,y)\,dyds
	\\
	&=v_0(t,x)+v_1(t,x)+v_2(t,x),
	\end{aligned}
\]
for a suitable $T_0\in(0, T]$, where we indicated by $\Lambda(t,s)$ the Schwartz kernel of $E(t,s)$ and $v_0(t)=E(t,0)u_0$.
We notice that $v_0$ is well-defined by the general theory of $SG$-operators and the properties of $E(t,s)$.
In particular, by Theorem \ref{thm:sobcont} we see that, for any $\kappa,\lambda\in[0,1)$,
\begin{equation}\label{eq:regv0}
	v_0\in C([0,T], H^{z,\zeta})\cap C^1([0,T],H^{z-m,\zeta-\mu}) \cap C^1((0,T],\caS)\cap L^1([0,T],H^{z+\kappa m',\zeta+\lambda\mu'}).
\end{equation}
For $\kappa,\lambda\in[0,\onehalf)$ it also holds $v_0\in L^2([0,T], H^{z+\kappa m',\zeta+\lambda\mu'})\hookrightarrow
L^2([0,T], H^{z,\zeta})$, see below.

\vskip+0.2cm

We then consider the map $u\to \mathcal{T}u$ on $L^2([0,T_0]\times\Omega, H^{z,\zeta}(\R^d))$, defined as follows:
\beqs\label{calm}
\hskip-0.8cm \mathcal{T}u(t)&:=&
v_0(t)+\ds\int_0^t E(t,s)\gamma(s,\cdot,u(s))ds+\ds\int_0^t E(t,s)\sigma(s,\cdot,u(s))dW_s
\\
\nonumber
&:=&v_0(t)+\mathcal{T}_1u(t)+\mathcal{T}_2u(t), \quad t\in[0,T_0],
\eeqs
where the last integral on the right-hand side is understood as the stochastic integral of the stochastic process $E(t,*)\sigma(*,\cdot,u(\cdot))\in L^2([0,T_0]\times\Omega, H^{z,\zeta})$ with respect to the cylindrical Wiener process $\{W_t(h)\}_{t\in[0,T], h\in\mathcal H_{\Xi}}$, associa\-ted with the random noise $\Xi(t)$, which is well-defined by Lemma \ref{lem:weightedpesz2} and takes values in $H^{z,\zeta}$.
To prove that the mild solution \eqref{eq:rigorous_solution} of the Cauchy problem \eqref{cpsemilin2} is indeed well-defined,
it is enough to check that
\[
	\mathcal{T}\colon L^2([0,T_0]\times\Omega, H^{z,\zeta}(\R^d))\longrightarrow L^2([0,T_0]\times\Omega, H^{z,\zeta})
\]
is well-defined, it is Lipschitz continuous on $L^2([0,T_0]\times\Omega, H^{z,\zeta})$, and it becomes a contraction if we take $T_0$ small enough. Then, an application of Banach's fixed point Theorem will provide the existence of a unique solution $u\in L^2([0,T_0]\times\Omega, H^{z,\zeta})$ satisfying $u=\mathcal{T}u$, that is \eqref{eq:rigorous_solution}.

\vskip+0.2cm

We first check that $\mathcal{T}u\in L^2([0,T_0]\times\Omega, H^{z,\zeta})$ for every $u\in L^2([0,T_0]\times\Omega, H^{z,\zeta})$. We have:
\begin{itemize}
\item[-] by Lemma \ref{lem:ref}, see formula \eqref{ordinebis} with $l=\kappa$, $\displaystyle v_0\in L^2([0,T_0], H^{z+\kappa m',\zeta+\lambda \mu'})\hookrightarrow L^2([0,T_0]\times\Omega, H^{z,\zeta})$;
\item[-] $\mathcal{T}_1u$ is in $L^2([0,T_0]\times\Omega, H^{z,\zeta})$; indeed, $\mathcal{T}_1u(t)$ is defined as the Bochner integral on $[0,t]$ of the function
$s\mapsto E(t,s)\gamma(s,\cdot,u(s))$ with values in $L^2(\Omega, H^{z,\zeta})$; by the properties of Bochner integrals, the continuity of $E(t,s)$ on Sobolev-Kato spaces, and the fact that
$\gamma\in \Lip(z-\kappa m',\zeta,\kappa m',0)$, we see that (for $C_{t,s}$ in \eqref{ell})
\beqs\nonumber
\|\mathcal{T}_1u\|_{L^2([0,T_0]\times\Omega, H^{z,\zeta})}^2&&=\ds\E\left[\int_0^{T_0}\|\mathcal{T}_1u(t)\|_{z,\zeta}^2\,dt\right]
=
\ds\int_0^{T_0}\E\left[\,\left\|\ds\int_0^t E(t,s)[\gamma(s,\cdot,u(s))]ds\right\|_{z,\zeta}^2\,\right]dt
\\\nonumber
&&\leq\ds\int_0^{T_0}\ds\int_0^t \E\left[\,\left\|E(t,s)[\gamma(s,\cdot,u(s))]\right\|_{z,\zeta}^2\,\right]dsdt
\lesssim \ds\int_0^{T_0}\ds\int_0^t C_{t,s}^2 \,\E\left[\,\left\|\gamma(s,\cdot,u(s)))\right\|_{z-\kappa m',\zeta-\lambda\mu'}^2\,\right] dsdt
\\\label{uguale}
&&\lesssim \ds\int_0^{T_0}\ds\int_0^t C_{t,s}^2 \,\E\left[\,\left\|\gamma(s,\cdot,u(s)))\right\|_{z-\kappa m',\zeta}^2\,\right] dsdt
\leq \ds\int_0^{T_0}\ds\int_0^t C_{t,s}^2C_s^2\,\E\left[\left(1+\|u(s)\right\|_{z,\zeta})^2\right]dsdt
\\\nonumber
&& = \ds\int_0^{T_0}\left(\ds\int_s^{T_0} (t-s)^{-2\ell }dt\right)C_s^2\,\E\left[\left(1+\|u(s)\right\|_{z,\zeta})^2\right]ds
\leq \frac{T_0^{1-2\ell }}{1-2\ell }\cdot \left(\max_{0\leq s\leq T_0}C_s^2\right)
\cdot\ds\int_0^{T_0}\E\left[\left(1+\|u(s)\right\|_{z,\zeta})^2\right]ds
\\\nonumber
&&\lesssim T_0^{1-2\ell }
\int_0^{T_0} \E\left[\left(1+\left\|u(s)\right\|_{z,\zeta}^2\right)\right]ds
=C_{1,T_0}\left[T_0+\|u\|^2_{L^2([0,T_0]\times\Omega, H^{z,\zeta})}\right]<\infty
\eeqs
since $\ell \in[0,\onehalf)$ by the choices of $l,\lambda$ and $C_s$ is continuous with respect to $s$, with $C_{1,T_0}$ continuous with respect to $T_0$ and going to $0$ for $T_0\to0^+$;
\\
\item[-] $\mathcal{T}_2u$ is in $L^2([0,T_0]\times\Omega, H^{z,\zeta})$, in view of the fundamental isometry \eqref{isomhilb},
Lemma \ref{lem:weightedpesz2} and the fact that the expectation can be moved inside and outside time integrals, by Fubini's Theorem:
\beqsn
\|\mathcal{T}_2u\|_{L^2([0,T_0]\times\Omega, H^{z,\zeta})}^{2}&=&\E\left[\ds\int_0^{T_0}\|\mathcal{T}_2u(t)\|_{z,\zeta}^2dt\right]
\\
&=&\ds\int_0^{T_0}\E\left[\left\|\ds\int_0^t E(t,s)\sigma(s,\cdot,u(s))dW_s\right\|_{z,\zeta}^2\right]dt
\\
&=&\ds\int_0^{T_0}\ds\int_0^t \E\left[\left\|E(t,s)\sigma(s,\cdot,u(s))\right\|_{L_2(\mathcal H_\Xi, H^{z,\zeta})}^2\right]dsdt
\\
&\lesssim&\ds\int_0^{T_0}\ds\int_0^t \E\left[ C^2_{t,s} \left(1+\|u(s)\|_{H^{z,\zeta}}\right)^2
\sup_{\eta\in\Rd}\int_\Rd \frac{\mathfrak{M}(d\xi)}{\langle\xi+\eta\rangle^{2\lambda \mu'}}\right]dsdt
\\
&=&\left(\sup_{\eta\in\Rd}\int_\Rd \frac{\mathfrak{M}(d\xi)}{\langle\xi+\eta\rangle^{2\lambda \mu'}}\right)
\cdot\ds\int_0^{T_0}\ds\int_0^t C^2_{t,s}\E\left[ \left(1+\|u(s)\|_{H^{z,\zeta}}\right)^2\right]dsdt
\\
&\lesssim&{T_0}^{1-2\ell}\cdot\left(\sup_{\eta\in\Rd}\int_\Rd \frac{\mu(d\xi)}{\langle\xi+\eta\rangle^{2\lambda \mu'}}\right)
\left(T_0+\ds\int_0^{T_0} \E\left[\left\|u(s)\right\|_{z,\zeta}^2\right]ds\right)
\\
&=&C_{2,T_0}\left[T_0+\|u\|^2_{L^2([0,T_0]\times\Omega, H^{z,\zeta})}\right]<\infty,
\eeqsn
with $C_{2,T_0}$ continuous with respect to $T_0$ and going to $0$ for $T_0\to0^+$.
\end{itemize}

\vskip+0.2cm

Now, we show that $\mathcal T$ is a contraction for $T_0>0$ suitably small. We take $u_1,u_2\in L^2([0,T_0]\times\Omega, H^{z,\zeta})$ and compute
\beqs\nonumber
\|\mathcal{T}u_1&-&\mathcal{T}u_2\|_{L^2([0,T_0]\times\Omega, H^{z,\zeta})}^2
\\\nonumber
&\leq& 2\left(\|\mathcal{T}_1u_1-\mathcal{T}_1u_2\|_{L^2([0,T_0]\times\Omega, H^{z,\zeta})}^2+\|\mathcal{T}_2u_1-
\mathcal{T}_2u_2\|_{L^2([0,T_0]\times\Omega, H^{z,\zeta})}^2\right)
\\\label{pri2}
&=&2\ds\int_0^{T_0}\E\left[\,\left\|\ds\int_0^t E(t,s)(\gamma(s,\cdot,u_1(s))-\gamma(s,\cdot,u_2(s)))ds\right\|_{z,\zeta}^2\,\right]dt
\\\label{se2}
&+&2\ds\int_0^{T_0}\E\left[\,\left\|\ds\int_0^t E(t,s)(\sigma(s,\cdot,u_1(s))-\sigma(s,\cdot,u_2(s)))dW_s\right\|_{z,\zeta}^2\,\right]dt.
\eeqs
In the term \eqref{pri2} here above we can move the expectation and the $(z,\zeta)-$norm inside the integral with respect to $s$. Then, by continuity of $E(t,s)$ on the Sobolev-Kato spaces, Definition \ref{def:lip}, and the embedding $H^{z-\kappa m',\zeta}\hookrightarrow H^{z-\kappa m',\zeta-\lambda\mu'}$, we obtain
\beqsn%
\ds\int_0^{T_0}\E&&\left[\,\left\|\ds\int_0^t E(t,s)(\gamma(s,\cdot,u_1(s))-\gamma(s,\cdot,u_2(s)))ds\right\|_{z,\zeta}^2\,\right]dt
\\
&&\lesssim\ds\int_0^{T_0}\ds\int_0^t \E\left[\,\left\|E(t,s)(\gamma(s,\cdot,u_1(s))-\gamma(s,\cdot,u_2(s)))\right\|_{z,\zeta}^2\,\right]dsdt
\\
&&\leq \ds\int_0^{T_0}\ds\int_0^t C_{t,s}^2\,\E\left[\,\left\|\gamma(s,\cdot,u_1(s))-\gamma(s,\cdot,u_2(s))\right\|_{z-\kappa m',\zeta-\lambda\mu'}^2\,\right]dsdt
\\
&&\lesssim \ds\int_0^{T_0}\ds\int_0^t C_{t,s}^2\,\E\left[\,\left\|\gamma(s,\cdot,u_1(s))-\gamma(s,\cdot,u_2(s))\right\|_{z-\kappa m',\zeta}^2\,\right]dsdt
\\
&&\leq \ds\int_0^{T_0}\ds\int_0^t C_{t,s}^2C_s^2\,\E\left[\,\left\|u_1(s)-u_2(s)\right\|_{z,\zeta}^2\,\right]dsdt
\\
&&\leq C_{T_0}\ds\int_0^{T_0} \E\left[\,\left\|u_1(s)-u_2(s)\right\|_{z,\zeta}^2\,\right]ds
\\
&&=C_{T_0}\|u_1-u_2\|^2_{L^2([0,T_0]\times\Omega, H^{z,\zeta})}.
\eeqsn
To the term \eqref{se2} we apply, here below, the fundamental isometry \eqref{isomhilb} to pass from the first to the second line, formula \eqref{battezzata2} of Lemma \ref{lem:weightedpesz2} to pass from the second to the third line, Definition \ref{def:lip} to pass from the third to the fourth line, and finally get:
\beqsn
\ds\int_0^{T_0}&&\hskip-0.3cm\E\left[\left\|\ds\int_0^t E(t,s)(\sigma(s,\cdot,u_1(s))-\sigma(s,\cdot,u_2(s)))dW_s\right\|_{z,\zeta}^2\right]dt
\\
&&=\ds\int_0^{T_0}\ds\int_0^t \E\left[\left\|E(t,s)(\sigma(s,\cdot,u_1(s))-\sigma(s,\cdot,u_2(s)))\right\|_{L_2(\mathcal H_\Xi, H^{z,\zeta})}^2\right]dsdt
\\
\\
&&\lesssim \ds\int_0^{T_0}\ds\int_0^t \E\left[
C^2_{t,s}\|u_1(s)-u_2(s)\|_{H^{z,\zeta}}^2\left(\sup_{\eta\in\Rd}\int_\Rd \frac{\mathfrak{M}(d\xi)}{\langle\xi+\eta\rangle^{2\lambda\mu'}}\right)\right]dsdt
\\
&&\leq \left(\sup_{\eta\in\Rd}\int_\Rd \frac{\mathfrak{M}(d\xi)}{\langle\xi+\eta\rangle^{2\lambda\mu'}}\right)
\ds\int_0^{T_0}\ds\int_0^t C_{t,s}^2\E\left[\left\|u_1(s)-u_2(s)\right\|_{z,\zeta}^2\right]dsdt
\\
&&\leq\left(\sup_{\eta\in\Rd}\int_\Rd \frac{\mathfrak{M}(d\xi)}{\langle\xi+\eta\rangle^{2\lambda\mu'}}\right)
\cdot C_{T_0}\cdot \|u_1-u_2\|^2_{L^2([0,T_0]\times\Omega, H^{z,\zeta})}
\eeqsn
with $C_{T_0}=T_0^{1-2\ell}C_{T_0}$ continuous with respect to $T_0$ since $\ell=\max\{\kappa,\lambda\}<1/2$. Summing up, we have proved that, for some $A>0$,
\beqsn
\|\mathcal{T}u_1-\mathcal{T}u_2\|_{L^2([0,T_0]\times\Omega, H^{z,\zeta})}^2\leq
C_{T_0}\,A\left(1+\sup_{\eta\in\Rd}\int_\Rd \frac{\mathfrak{M}(d\xi)}{\langle\xi+\eta\rangle^{2\lambda\mu'}}\right)
\cdot \|u_1-u_2\|^2_{L^2([0,T_0]\times\Omega, H^{z,\zeta})} \,,
\eeqsn
that is, $\mathcal{T}$ is Lipschitz continuous on $L^2([0,T_0]\times\Omega, H^{z,\zeta})$. Moreover, in view of the assumption \eqref{eq:meascm2} and since
$C_{T_0}$ is continuously dependent on $T_0$ and going to $0$ for $T_0\to0^+$, we can take $T_0>0$ so small that
%
\[
	 C_{T_0}\,A\left(1+\sup_{\eta\in\Rd}\int_\Rd \frac{\mathfrak{M}(d\xi)}{\langle\xi+\eta\rangle^{2\lambda\mu'}}\right)< 1,
\]
and then $\mathcal{T}$ becomes a strict contraction on $L^2([0,T_0]\times\Omega, H^{z,\zeta})$, so that it admits a unique fixed point $u=\mathcal{T}u$,
$u\in L^2([0,T_0]\times\Omega, H^{z,\zeta})$, as claimed.

\vskip+0.2cm

The proof is complete.
\end{proof}

\section{Random-field solutions for the associated linear SPDEs}

We recall that an alternative approach to give meaning to \eqref{eq:SPDE} is the one due to Walsh and Dalang (see \cite{conusdalang,dalang,walsh}), where the stochastic integral in \eqref{eq:mildsolutionSPDE} is defined as a stochastic integral with respect to a martingale measure derived from the random noise $\dot \Xi$. With this alternative approach one obtains a so-called {\em random-field solution}, that is, a solution $u$ defined as a map associating a random variable with each
$(t,x)\in[0,T_0]\times\Rd$, where $T_0>0$ is the time horizon of the equation. In the paper \cite{linearpara} we constructed random-field solutions to linear parabolic SPDEs of the form \eqref{eq:SPDE}. That construction cannot work for non-linear equations of the form \eqref{eq:SPDE}. Indeed, the stationarity condition
$\Lambda=\Lambda(t-s,x-y)$ would be needed, but such condition (fulfilled by SPDEs with constant coefficients) cannot be assumed here, since we want to
deal with general linear operators $L$ in \eqref{eq:SPDE}, that is, admitting variable coefficients, depending on $(t,x)$.
It is well known that, in many cases, the two approaches lead to the same solution $u$ (in some sense) of an SPDE, see \cite{dalangquer} for a precise comparison.
In this final section we compare, in the linear case, the function-valued solution to \eqref{eq:SPDE}, that we constructed in Section 4, with the random-field solution found in \cite{linearpara}. 

Let us then focus on the special case of \eqref{cpsemilin2} with $L$ satisfying \eqref{elle2}, \eqref{eq:sghypoell} with
$\sigma(t,x,u(t,x))=\sigma(t,x)$ and $\gamma(t,x,u(t,x))=\gamma(t,x)$, $\gamma,\sigma\in C([0,T],H^{z,\zeta})$,
$z\ge0$, $\zeta>\frac{d}{2}$. We suppose that $s\mapsto\caF(\sigma)(s)=\nu_s\in L^2([0,T], \caM_b)$, $\caM_b$ the space of
complex-valued measures with finite total variation and that $|\nu_s|_{tv}\in L^\infty(0,T)$. That is, we look at the Cauchy problem
\beqs\begin{cases}\label{cplinbis}
 Lu(t,x) = \gamma(t,x) + \sigma(t,x)\dot{\Xi}(t,x),\quad (t,x)\in(0,T]\times\R^d
 \\
 u(0,x)=u_0(x),\quad x\in\R^d,
\end{cases}
\eeqs
for the linear SPDEs studied in \cite{linearpara}.
Trivially, by Definition \ref{def:lip}, these (more restrictive) hypotheses on $\gamma,\sigma$ imply
$\gamma,\sigma\in \Lip(z,\zeta,r,\rho)\subset \Liploc(z,\zeta,r,\rho)$ for any $r,\rho\ge0$.
If the spectral measure $\mathfrak{M}$ is absolutely continuous with respect to the Lebesgue measure, the condition \eqref{eq:meascm2} simplifies into
\begin{equation}\label{eq:meascm2ac}
  		\exists \lambda\in [0,\onehalf):\quad \int_\Rd \frac{\mathfrak{M}(d\xi)}{\langle\xi\rangle^{2\lambda\mu'}} < \infty.
	\end{equation}
In this case, under condition \eqref{eq:meascm2ac}, Theorem \ref{thm:linearcm} gives the existence of a unique  function-valued solution for the linear
Cauchy problem \eqref{cplinbis}, which we here denote by $u_\fv$.
In Theorem 6 of \cite{linearpara} (see the case (H2) there) we proved that under the assumptions of the present Section there exists also    random-field solution of
\eqref{cplinbis}, which we here denote by $u_\rf$. We now wish to compare $u_\rf$ with $u_\fv$.

\begin{remark}\label{rem:comp}
	Notice that, in analogy with \eqref{eq:rigorous_solution}, $u_\rf$ satisfies (with the usual abuse of notation)
	\beqs\label{eq:sollinbis}
		u_\rf(t,x)&=&v_0(t,x)+\int_0^t\int_{\R^d}\Lambda(t,s,x,y)\gamma(s,y)\,dyds
		+\int_0^t\int_{\R^d}\Lambda(t,s,x,y)\sigma(s,y)\dot{\Xi}(s,y)\,dyds.
	\eeqs
	While the first two terms in the right-hand side of \eqref{eq:sollinbis} clearly coincide with the first two terms
	in the right-hand side of \eqref{eq:rigorous_solution}, the corresponding third, stochastic terms in \eqref{eq:rigorous_solution} and \eqref{eq:sollinbis}
	are defined in different ways.
\end{remark}

In the next Proposition \ref{prop:cmp}, we obtain that a random-field solution of \eqref{cplinbis} is also a function-valued solution. This indirectly provides, via the uniqueness of $u_\fv$, the uniqueness of $u_\rf$.

\begin{proposition}\label{prop:cmp}
	Consider the linear Cauchy problem \eqref{cplinbis} with $L$ as in \eqref{elle2}, \eqref{eq:sghypoell},
	$\gamma,\sigma\in C([0,T],H^{z,\zeta})$, $z\ge0$, $\zeta>\frac{d}{2}$, $s\mapsto\caF(\sigma)(s)=\nu_s\in L^2([0,T], \caM_b)$,
	$|\nu_s|_{tv}\in L^\infty(0,T)$. Suppose that $\mathfrak{M}$ is absolutely continuous and \eqref{eq:meascm2ac} holds.
	Let $u_\rf$ and $u_\fv$ be the random-field solution
	and the function-valued solution of \eqref{cplinbis}, respectively.
	Then, $u_\rf=u_\fv=u$.
\end{proposition}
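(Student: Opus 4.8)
The plan is to reduce the claimed identity to the coincidence of the two \emph{stochastic} terms, and then to identify those terms through the definition \eqref{cammart} of $\mathcal H_\Xi$. By Remark \ref{rem:comp} the deterministic parts $v_0$ and $v_1$ of \eqref{eq:rigorous_solution} and \eqref{eq:sollinbis} already coincide, so it remains to compare the Walsh--Dalang stochastic term
\[
v_2^\rf(t,x)=\int_0^t\!\!\int_{\R^d}\Lambda(t,s,x,y)\,\sigma(s,y)\,\dot\Xi(s,y)\,dy\,ds
\]
with the value at $x$ of the Da Prato--Zabczyk stochastic term $v_2^\fv(t)=\int_0^t\Phi(t,s)\,dW_s$, where $\Phi(t,s)\psi=E(t,s)\sigma(s,\cdot)\psi$. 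The essential simplification is that in the linear case the integrand $\Lambda(t,s,x,\cdot)\sigma(s,\cdot)$ is \emph{deterministic}, so both terms are Wiener integrals of one and the same kernel against the single Gaussian noise $\Xi$, whose covariance \eqref{eq:correlation} is common to both constructions.

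First I would localise at $x$. Since $z\ge0$ and $\zeta>\frac d2$, the Sobolev--Kato embedding $H^{z,\zeta}(\R^d)\hookrightarrow C_b(\R^d)$ makes point evaluation $\delta_x$ a bounded linear functional on $H^{z,\zeta}$. Bounded functionals commute with the cylindrical--Wiener integral and with the series defining it, so
\[
[v_2^\fv(t)](x)=\Big\langle\delta_x,\int_0^t\Phi(t,s)\,dW_s\Big\rangle=\int_0^t g_{t,x}(s)\,dW_s,\qquad g_{t,x}(s)=\Phi(t,s)^*\delta_x\in\mathcal H_\Xi,
\]
which is now a scalar Da Prato--Zabczyk integral. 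The core step is to identify $g_{t,x}(s)$ explicitly. Testing against $\psi=\widehat{\varphi\mathfrak{M}}\in\mathcal H_\Xi$ and using the kernel representation of $E(t,s)$ together with Fubini, I compute
\[
\langle\delta_x,\Phi(t,s)\widehat{\varphi\mathfrak{M}}\rangle=\int_{\R^d}\Lambda(t,s,x,y)\,\sigma(s,y)\,\widehat{\varphi\mathfrak{M}}(y)\,dy=\big\langle\caF\big(\Lambda(t,s,x,\cdot)\sigma(s,\cdot)\big),\varphi\big\rangle_{L^2_{\mathfrak{M},s}},
\]
up to the fixed normalising constants and a complex conjugation. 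Thus the $\mathcal H_\Xi$-representative of $\delta_x\circ\Phi(t,s)$ is precisely (the symmetrisation of) the deterministic kernel $\Lambda(t,s,x,\cdot)\sigma(s,\cdot)$ appearing in $v_2^\rf(t,x)$. Because both stochastic integrals are centred and isometric with the same spectral-measure covariance, equality of the integrands in $\mathcal H_\Xi$ forces $\E\,|v_2^\rf(t,x)-[v_2^\fv(t)](x)|^2=0$; equivalently, one may check by linearity that both integrals agree on elementary integrands and then pass to the limit through the common isometry, which is exactly the abstract comparison carried out in \cite{dalangquer}.

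The delicate point, and the main obstacle, is to make the localisation and the identification rigorous: one must verify that $g_{t,x}(s)=\Phi(t,s)^*\delta_x$ genuinely belongs to $\mathcal H_\Xi$ for almost every $s$ --- that is, that $\caF(\Lambda(t,s,x,\cdot)\sigma(s,\cdot))\in L^2_{\mathfrak{M},s}(\R^d)$ --- and that $s\mapsto g_{t,x}(s)$ is square-integrable, so that the scalar integral is well defined and the interchange with $\delta_x$ is licit. These integrability facts are furnished by Lemma \ref{lem:weightedpesz2} together with the standing hypotheses $\gamma,\sigma\in C([0,T],H^{z,\zeta})$, $\caF(\sigma)\in L^2([0,T],\caM_b)$, $|\nu_s|_{\tv}\in L^\infty(0,T)$ and \eqref{eq:meascm2ac}, which are precisely the conditions under which $u_\rf$ was constructed in \cite{linearpara}. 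Once this is in place, combining the coincidence of the stochastic terms with Remark \ref{rem:comp} yields $u_\rf=u_\fv=u$, and the uniqueness of $u_\fv$ from Theorem \ref{thm:linearcm} then also yields the uniqueness of $u_\rf$.
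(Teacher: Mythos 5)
Your proposal is correct and follows essentially the same route as the paper, whose proof consists of invoking the comparison argument of \cite[Proposition 3.12]{dalangquer} (as adapted in \cite[Proposition 4.17]{ACS19b}): reduce to the stochastic terms, localise by point evaluation (licit since $z\ge 0$, $\zeta>\frac{d}{2}$ give $H^{z,\zeta}\hookrightarrow C_b$), identify the $\mathcal H_\Xi$-representative of $\delta_x\circ\Phi(t,s)$ with $\caF(\Lambda(t,s,x,\cdot)\sigma(s,\cdot))$, and conclude via the common isometry after checking agreement on elementary integrands. You have in effect unpacked the cited argument, including the correct identification of where Lemma \ref{lem:weightedpesz2} and the hypotheses on $\sigma$ and $\mathfrak{M}$ are needed to justify the integrability and the interchange with $\delta_x$.
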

The argument of the proof is based on \cite[Proposition 3.12]{dalangquer}, and is the same one employed to prove the analogous result given in
\cite[Proposition 4.17]{ACS19b}. 
%


\end{document}